\newtheorem{lemma}{\bf Lemma}
\newtheorem{theorem}{\bf Theorem}
\renewenvironment{proof}{\noindent {\bf Proof: }}{\rm\\}
\theoremstyle{definition}
{\rm}
\renewcommand{\p@algorithm}{\arabic{algorithm}\expandafter\@gobble}
\newcounter{step}[algorithm]
\newcommand\STEP[2][\(\triangleright\)]{%
	\refstepcounter{step}
	\vskip 0.25\baselineskip
	\item[]\hskip -\algorithmicindent #1 \textbf{Step \arabic{step}}%
	\ifthenelse{\equal{\unexpanded{#2}}{}}{}{ (\texttt{#2})}%
	\textbf{.}%
}
\def\algo#1\end{%
	\noindent\fbox{%
	\begin{minipage}[b]{\dimexpr\columnwidth-\algorithmicindent\relax}
	\begin{algorithmic}
	#1
	\end{algorithmic}
	\end{minipage}
	}%
\end}
\begin{document}
\title{Optimizing the  Kreiss constant}

\author{Pierre Apkarian$^1$}
\author{Dominikus Noll$^{2}$}
\thanks{$^1$ONERA, Information Processing and Systems, Toulouse, France}
\thanks{$^2$Institut de Math\'ematiques, Universit\'e de Toulouse, France}


\maketitle

\begin{abstract} 
The Kreiss constant $K(A)$ of a stable matrix $A$ conveys information about the transient 
behavior of system trajectories in response to  initial conditions. 
We present an efficient way to compute the Kreiss constant $K(A)$, and we show how
feedback can be employed to make the Kreiss constant $K(A_{cl})$ in closed loop significantly smaller.
This is expected to reduce transients in the closed loop trajectories. The proposed approached is compared to potential competing techniques.  

\vspace{.2cm}
\noindent {\sc Key words}. 
Unwarranted large transients $\cdot$  non-normal behavior $\cdot$ 
mixed uncertainty $\cdot $ structured controller $\cdot$  NP-hard problem $\cdot$  nonsmooth optimization $\cdot$ $\mu$-analysis


\end{abstract}

\section{Introduction}

Given a stable autonomous system   
\begin{equation}\label{eqPlant1}
\dot x = A x, \quad x(0):= x_0, \quad A \in \mathbb R^{n\times n}\,,
\end{equation}
the time-dependent worst-case transient growth of the trajectories in response to initial conditions $x_0$
is 
$$
\max_{\|x_0\| = 1} \|e^{At} x_0\|= \|e^{At}\| \,,
$$
where $\|.\|$ denotes both the vector 2-norm and the induced
spectral matrix norm or maximum singular value norm. The maximum transient growth, or {\it transient growth} for short, is then the quantity
\begin{equation}
    \label{growth}
M_0(A) =\sup_{t \geq 0} \|e^{At}\|,
\end{equation}
which gives information about the maximum amplification
of system responses to all possible initial conditions at all times.

The Kreiss constant $K(A)$ of the matrix $A\in \mathbb R^{n\times n}$ may be introduced by means of its resolvent
as
\begin{equation}\label{eqKr1}
K(A):= \max_{{\rm Re} (s) > 0}\; {\rm Re} (s) \, \|(sI-A)^{-1} \| \,,
\end{equation}
and its importance is due to the Kreiss Matrix Theorem \cite[p. 151, p. 183]{trefethen2005spectra}, which relates it to the transient growth $M_0(A)$ by
providing lower and upper bounds:
\begin{equation}\label{eqBounds}
K(A) \leq M_0(A)= \sup_{t \geq 0} \|e^{At}\| \leq e\,n\,  K(A)\,,
\end{equation}
where $e=2.7183...$ is the Euler number. Alternatively,
the Kreiss constant has also the representation
\begin{equation}
K(A) = \sup_{\epsilon > 0} \frac{\alpha_\epsilon(A)}{\epsilon},
\end{equation}
where $\alpha_\epsilon(A)$ is the $\epsilon$-pseudo spectral abscissa \cite{trefethen2005spectra}.

The Kreiss constant was originally introduced in the discrete setting as an analytic tool to assess stability
of numerical schemes \cite{kreiss1962stabilitatsdefinition}. Since then it has 
manifested itself as a quantitative measure 
of non-normal behavior of matrices \cite{trefethen2005spectra,asllani2018structure}, owing to the fact
that $K(A)\geq 1$, with equality e.g. if $A$ is normal.  More precisely, the global minimum $K(A) = 1$ is attained if and only if 
$M_0(A)=1$ attains its global minimum, which is at those matrices $A$ where
$e^{At}$ is a contraction in the spectral norm. 
Outside the realm of dynamical systems, this quantitative aspect of $K(A)$  has for instance been of interest in the analysis of networks \cite{asllani2018structure}. 

Even though our principal concern here is with matrices, it is worthwhile having a look at the case
of $C_0$-operator semi-groups. Here 
the left hand estimate $K(A) \leq M_0(A)$ from (\ref{eqBounds}) is still valid, as is the observation that
$K(A)=1$ implies $M_0(A)=1$, with the global minimum attained at least in Hilbert space 
for contraction semi-groups in the spectral norm. Both facts are easy consequences of the Hille-Yoshida theorem \cite{engel_nagel2000}.
The conclusion is that even for semi-groups the transient dynamics are suitably assessed through the Kreiss constant.


While the Kreiss constant $K(A)$ has received ample attention in numerous books, treatises and articles 
as a theoretical quantity to
analyze transient system behavior, \cite{trefethen2005spectra}, 
its computation has only very recently been addressed. In \cite{mitchell2019computing} the author  
uses a variety of  local optimization techniques in tandem with global searches to compute $K(A)$ with certificates. 
In \cite{trefethen2005spectra},  $K(A)$ is simply estimated graphically by plotting the ratio
$\alpha_\epsilon(A)/\epsilon$ against $\epsilon$ and searching for the maximum, and this seems to have been pioneered in \cite{mengi2006measures}.

In the present paper, we show that the Kreiss constant $K(A)$ can be computed exactly with limited complexity using
techniques from robust control. Our new characterization opens the way to more challenging situations,  
where the Kreiss constant is not just computed, but more ambitiously,
minimized in closed loop with the goal  to constrain the  transient growth of a plant (\ref{eqPlant1}) 
by the use of feedback. For short, one may wish to use feedback to bring the closed-loop
$A_{cl}$ closer to contractive transient behavior  than the original matrix $A$.

This is expected to have consequences in feedback control of non-linear systems,
where it is known that non-normality of the system Jacobian at steady state may lead to large  transient amplifications even for well-damped spectra, 
which trigger non-linear effects responsible for  instability, or lead to undesirable limit-cycle  dynamics.
This phenomenon is well known in the fluid dynamic community 
\cite{Leclercq2019,schmid2014analysis,sipp2010dynamics,trefethen1993hydrodynamic,reddy1993energy}.

The structure of the paper is as follows. In section \ref{sect:exact}, we obtain a formula for $K(A)$
which can be used to compute it with reasonable effort, by relating it to the structured singular value or $\mu$  known in
robust system analysis. In section \ref{sect:feedback} we widen the scope and address the problem of
minimizing $K(A_{cl})$ in closed loop. Since this is an NP-hard problem, a fast heuristic is presented,
which is based on non-differential optimization techniques. Section \ref{sect:optim} gives a short overview of these
techniques, and shows how the result of the local optimization can be certified using the techniques of section \ref{sect:exact}.
Numerical experiments and additional concurrent techniques are presented in section \ref{sect:numerics}.

\section*{\sc Notation \label{sect-nota}}
For complex matrices $X^H$ stands for the conjugate transpose. 
%
The terminology  follows \cite{ZDG:96}.
Given   partitioned matrices  $$M:= 
                                    \begin{bmatrix}
                                      M_{11} & M_{12} \\
                                      M_{21} & M_{22} \\
                                    \end{bmatrix}
                                     \;\mbox{ and }
N:= 
                                    \begin{bmatrix}
                                      N_{11} & N_{12} \\
                                      N_{21} & N_{22} \\
                                    \end{bmatrix}                                
$$
of appropriate dimensions  and assuming existence of inverses, the Redheffer star product \cite{doyle91_2,R:1960} 
of $M$ and $N$ is  $M\star N := $
$$
              \begin{bmatrix}
                 M \star N_{11}& M_{12}(I-N_{11}M_{22})^{-1} N_{12} \\
                N_{21}(I-M_{22}N_{11})^{-1} M_{21} &  N \star M_{22} \\
              \end{bmatrix}.
$$

When $M$ or $N$ do not have an explicit $2\times 2$ structure, 
we assume consistently that the star product reduces
to a linear fractional transform (LFT). The lower LFT of $M$
and $N$ is denoted $M\star N$ and defined as
$$M\star N := M_{11} + M_{12} N (I-M_{22}N)^{-1} M_{21},$$
and the upper LFT of $M$ and $N$ is denoted $N\star M$ and obtained as
$$N\star M:= M_{22} + M_{21} N (I-M_{11}N)^{-1} M_{12}\,. $$
With these definitions, the $\star$ operator is associative. 

\section{Exact computation of the Kreiss constant \label{sect:exact}}
It is readily seen from (\ref{eqBounds}) that the  Kreiss constant is finite if 
system (\ref{eqPlant1}) is stable, that is, has strictly negative spectral abscissa $\alpha(A) < 0$. 
When unstable matrices are concerned, it is convenient to consider translated bounds, cf. \cite{trefethen2005spectra}, which 
correspond to shifting the matrix $A$ to stability, e.g.  by its spectral abscissa.
For the rest of the paper the symbol $K(A)$ will therefore only be used when $A$ is stable.
\begin{theorem}
\label{theo1} 
The Kreiss constant $K(A)$ can be computed through the robust $H_\infty$-performance analysis program
\begin{align}
    \label{eqRob}
K(A) &= \max_{\delta \in [-1,\,1]} \left\| \left(s I - \left( \textstyle\frac{1-\delta}{1+\delta}  A -I\right)\right)^{-1}\right\|_\infty 
= \max_{\delta\in [-1,1]}\max_{\omega\in [0,\infty]} \overline{\sigma}\left(\left( j\omega I - \left( \textstyle\frac{1-\delta}{1+\delta}  A -I\right)\right)^{-1} \right).
\end{align}
\end{theorem}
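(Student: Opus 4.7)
The plan is a direct change of variables inside the definition
$K(A)=\sup_{\mathrm{Re}(s)>0}\mathrm{Re}(s)\,\|(sI-A)^{-1}\|$ that separates the real part of $s$ from the imaginary part and re-identifies the imaginary sweep as an $H_\infty$ norm.

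First I would write $s=\sigma+j\omega$ with $\sigma>0$ and pull $\sigma$ inside the resolvent by the elementary identity
\[
\sigma\,(sI-A)^{-1}=\bigl((s/\sigma)I-A/\sigma\bigr)^{-1}=\bigl((1+j\omega/\sigma)I-A/\sigma\bigr)^{-1}=\bigl(j(\omega/\sigma)I-(A/\sigma-I)\bigr)^{-1}.
\]
Next I would introduce the new variables $\mu:=1/\sigma>0$ and $\omega':=\omega/\sigma=\mu\omega$. Since for each fixed $\sigma>0$ the map $\omega\mapsto\omega/\sigma$ is a bijection of $\mathbb R$, the pair $(\sigma,\omega)\in(0,\infty)\times\mathbb R$ is in bijection with $(\mu,\omega')\in(0,\infty)\times\mathbb R$, and the previous identity becomes
\[
\sigma\,(sI-A)^{-1}=\bigl(j\omega' I-(\mu A-I)\bigr)^{-1}.
\]
Taking spectral norms and sup's over the corresponding pairs therefore yields
\[
K(A)=\sup_{\mu>0}\sup_{\omega'\in\mathbb R}\overline\sigma\!\left(\bigl(j\omega' I-(\mu A-I)\bigr)^{-1}\right).
\]

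The second step is to rewrite the inner supremum as an $H_\infty$ norm. For this I need that $\mu A-I$ is Hurwitz for every $\mu>0$: its eigenvalues are $\mu\lambda_i(A)-1$, and since $A$ is stable each has real part $\mu\,\mathrm{Re}\,\lambda_i(A)-1<-1<0$. Consequently $\sup_{\omega'\in\mathbb R}\overline\sigma((j\omega'I-(\mu A-I))^{-1})=\|(sI-(\mu A-I))^{-1}\|_\infty$, so
\[
K(A)=\sup_{\mu>0}\bigl\|(sI-(\mu A-I))^{-1}\bigr\|_\infty.
\]

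Finally I would reparametrize $\mu>0$ via the Möbius map $\mu=\frac{1-\delta}{1+\delta}$, which is a continuous bijection from $\delta\in(-1,1)$ onto $\mu\in(0,\infty)$. This converts the $\sup$ over $\mu$ into the advertised sup over $\delta\in(-1,1)$. Replacing this with $\max$ over the closed interval $[-1,1]$ is legitimate because the two boundary cases can be handled directly: at $\delta=1$ we have $\mu=0$ and $\|(sI+I)^{-1}\|_\infty=1\le K(A)$, while at $\delta=-1$ (formally $\mu=\infty$) the scaling $\omega=\mu\omega'$ applied to $(j\omega' I-\mu A+I)^{-1}=\mu^{-1}(j(\omega'/\mu)I-A+\mu^{-1}I)^{-1}$ shows that the norm tends to $0$ because $A$ is stable; thus including these endpoints neither changes the value nor obstructs attainment.

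The only genuinely delicate point is the boundary behaviour at $\delta=\pm1$ (which justifies writing a max rather than a sup) — once the change of variables and stability of $\mu A-I$ are in place, the rest is bookkeeping. The interior reduction itself is a single algebraic identity followed by a reparametrization of $(0,\infty)$, so the heart of the proof lies in recognizing this identity.
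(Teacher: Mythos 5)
Your proof is correct and follows essentially the same route as the paper: substitute $s=\sigma+j\omega$, pull $\mathrm{Re}(s)$ inside the resolvent, and reparametrize $1/\sigma$ by the M\"obius map $\delta\mapsto\frac{1-\delta}{1+\delta}$. You merely spell out two points the paper treats tersely (that $\mu A-I$ is Hurwitz so the $\omega'$-supremum is a genuine $H_\infty$ norm, and the endpoint behaviour at $\delta=\pm1$), which is a welcome but not essentially different addition.
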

\begin{proof}
Note that for $\delta = -1$ the expression between the norm signs is understood to denote the zero matrix, which contributes only the value
$0$ to the maximization. 

Starting with $s:= x + j y$ in (\ref{eqKr1}) gives
\begin{align*}
    K(A) & =  \sup_{x >0,y} x \|( (x +j y)I-A)^{-1} \| 
     =  \sup_{x >0,y} \left\| \left(\left(1+ j \textstyle\frac{y}{x}\right) I -\textstyle\frac{1}{x}A\right)^{-1}\right\|.
\end{align*}
The  change of variables $(y/x,1/x) = (\omega, \frac{1-\delta}{1+\delta}) $  is a bijective mapping 
from $\mathbb R \times \mathbb R_+$ into $\mathbb R \times (-1,\,1]$ and leads to the characterization
in (\ref{eqRob}). 
\hfill $\square$
\end{proof}

For future use we express program (\ref{eqRob}) using the Redheffer star product or equivalently the upper LFT
(see e.g. \cite{YoD:90,doyle91-1}): 
\begin{equation}
    \label{lft}
K(A) = \max_{\omega\in [0,\infty]} \max_{\delta\in [-1,1]} \overline{\sigma}\left(j\omega I-\left( \left( \delta I_n  \star Q\right) A -I\right)^{-1}  \right),
\end{equation}
where
$$
Q = \begin{bmatrix} -I_n &\sqrt{2} I_n \\-\sqrt{2}I_n& I_n\end{bmatrix}\,,
$$
and where $\delta I_n \star Q$ is understood as of repeating the rational term $n$ times.

As one notices the computation of (\ref{eqRob}) involves two global maximization
steps, one over the frequency axis $\omega\in \mathbb R$, and one over the uncertainty $\delta\in [-1,1]$, which can be performed
in either order.
This leads to two strategies, which will
both be exploited in this text.

The interpretation of (\ref{eqRob}) is that of a
transfer function $T_{wz}(s,\delta)$ with uncertainty $\delta\in [-1,1]$, where the worst-case
$H_\infty$-norm $\max_{\delta \in [-1,1]} \|T_{wz}(\cdot,\delta)\|_\infty$ has to be computed.  
In order to highlight this,
we represent the situation in state-space using the plant:
\begin{equation}\label{eqP}
P(s): \quad \left\{
\begin{aligned}
    \dot{x} &= Ax - x + \sqrt{2} w_\delta + w \\
    z_\delta &= -\sqrt{2} Ax - w_\delta \\
    z &= x 
\end{aligned} \right.
\end{equation}
which represents the transfer function form $(w_\delta,w)$ to $(z_\delta,z)$, and 
which is in upper feedback with the block $w_\delta = \delta z_\delta$,
leading to $T_{wz}(\cdot,\delta) = \delta I_n\star P$ and giving the Redheffer representation
\begin{equation}
\label{eqRob3}
K(A) = \max_{\delta \in [-1,1]} \| \delta I_n \star P\|_\infty
\end{equation}
of the Kreiss constant as a worst-case $H_\infty$-norm. See Fig. \ref{figDiag}.

\begin{figure}[htp]
\centering
$\qquad\qquad$\includegraphics[scale=0.6]{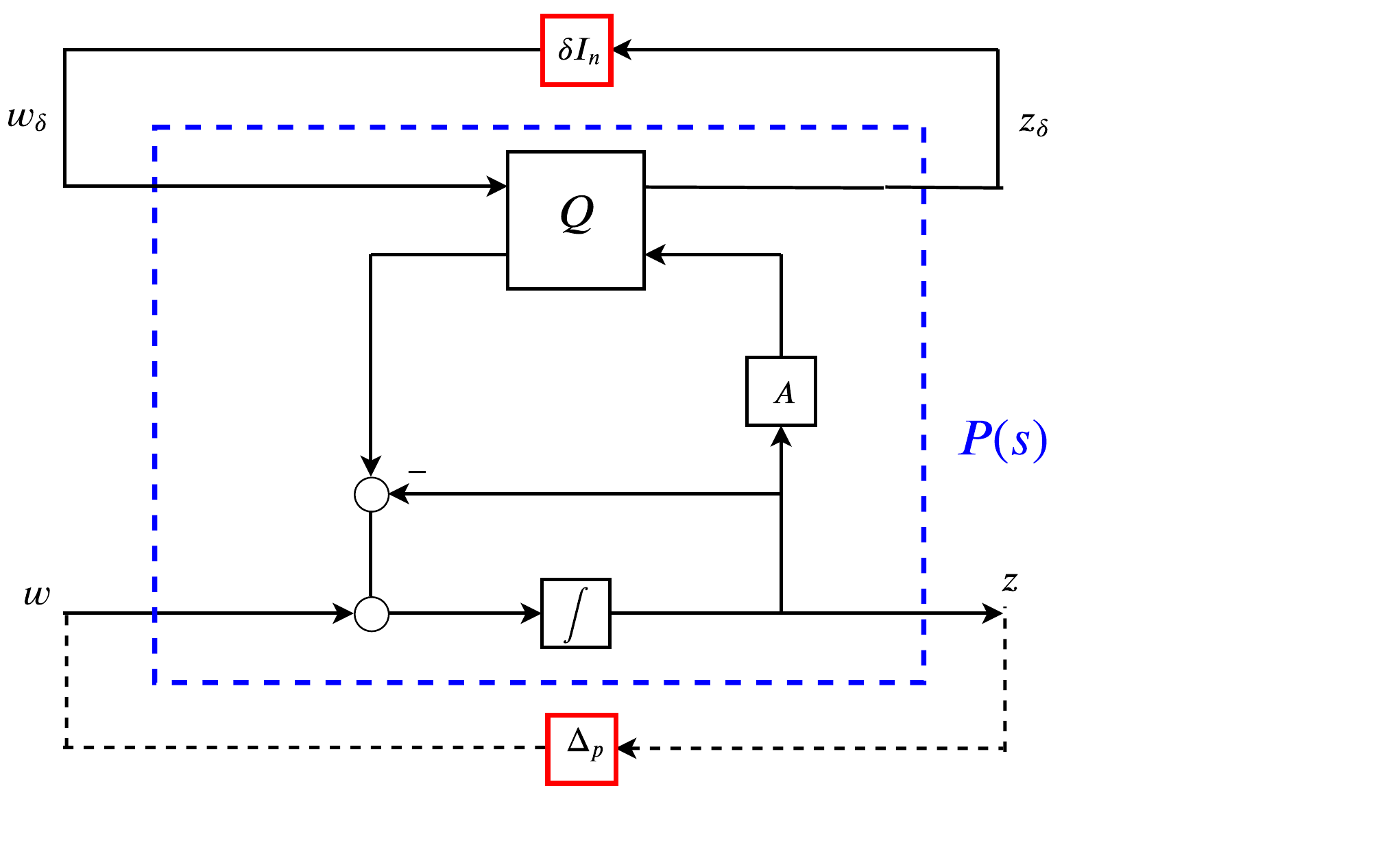}
\caption{Diagram representation of Kreiss constant} 
\label{figDiag}
\end{figure}

Formula (\ref{lft}) leads to a different approach.  Namely,
as is common in robustness analysis, the performance channel $w\to z$ can be replaced with  a fictitious full block $\Delta_p\in \mathbb C^{n\times n}$, leading to a specially structured robust stability problem. See  Fig. \ref{figDiag}. 
The problem has now two blocks and can be addressed using  the 
structured singular value (SSV) or $\mu$-singular value \cite{Young:96,doyle91-1,YoD:90}. Recall that
for a complex matrix $M$ and a structure $\mathbf \Delta$ of uncertain matrices $\Delta$,  $\mu_{\mathbf \Delta}(M)$ is defined as 
$$
\mu_{\mathbf \Delta}(M):= \frac{1}{ \inf \left\{\|\Delta\|:\, \Delta \in \mathbf \Delta,\, \det (I-M\Delta) = 0\right\}},
$$
where as usual inf$\,\emptyset =+\infty$, so that $\mu_{\mathbf \Delta}(M)=0$ if no $\Delta \in \mathbf \Delta$ makes $I-M\Delta$ singular.

In our case the structured singular value is computed with respect to the structure
${\bf \Delta} = \{{\rm diag}(\delta I_n,\Delta_p): \delta \in \mathbb R, \Delta_p \in \mathbb C^{n\times n}\}$.
We have by \cite[Thm. 11.9]{ZDG:96}:
\begin{lemma}
\label{lemma1}
Let $\omega$ be fixed. The following statements {\rm 1.} and {\rm 2.} are equivalent:
\begin{enumerate}
    \item[\rm 1.] \begin{itemize} \item[(i)] $\displaystyle \delta I_n \star P(j\omega)$ is well-posed over $[-1,1]$ and \\[-0.1cm]
\item[(ii)] $\displaystyle\max_{\delta \in [-1,1]}  \overline{\sigma}\left(\delta I_n \star P(j\omega) \right) <  \gamma$ \\[0.5cm]\end{itemize}
\item[\rm 2.]
$\mu_{\bf \Delta}\left(P(j\omega)\begin{bmatrix} I_n&0\\0&I_n/\gamma \end{bmatrix}    \right) < 1.$ 
\end{enumerate}
\hfill $\square$
\end{lemma}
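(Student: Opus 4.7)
The plan is to derive the equivalence directly from the definition of the structured singular value, by using a Schur complement identity on the determinant $\det(I - M\Delta)$, where $M = P(j\omega)\,\mathrm{diag}(I_n, I_n/\gamma)$ and $\Delta = \mathrm{diag}(\delta I_n, \Delta_p)$. By definition, $\mu_{\bf\Delta}(M) < 1$ means that for every admissible $\Delta$ with $\|\Delta\| = \max(|\delta|,\bar\sigma(\Delta_p)) \le 1$, the matrix $I - M\Delta$ is nonsingular.

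The first step is to partition $P(j\omega)$ consistently as $\begin{bmatrix} P_{11} & P_{12} \\ P_{21} & P_{22}\end{bmatrix}$ and apply the standard block-determinant identity
\[
\det(I - M\Delta) \;=\; \det(I - \delta P_{11})\;\det\!\left(I - \tfrac{1}{\gamma}\bigl(\delta I_n \star P(j\omega)\bigr)\Delta_p\right),
\]
valid whenever the first factor is nonzero. The factorization is exactly the reason the Redheffer star product was introduced, and it splits the nonsingularity requirement into a well-posedness condition on the $\delta$-loop and a singular-value condition on the $\Delta_p$-loop.

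The second step is to translate each factor. Requiring the first factor to be nonzero for every $\delta \in [-1,1]$ is precisely condition 1(i), well-posedness of $\delta I_n \star P(j\omega)$ on $[-1,1]$. Assuming this, the second factor is nonzero for every $\Delta_p \in \mathbb C^{n\times n}$ with $\bar\sigma(\Delta_p) \le 1$ if and only if $\bar\sigma\!\left(\tfrac{1}{\gamma}(\delta I_n \star P(j\omega))\right) < 1$, by the elementary fact $\mu_{\mathbb C^{n\times n}}(N) = \bar\sigma(N)$, equivalently $\inf\{\|\Delta_p\|:\det(I-N\Delta_p)=0\} = 1/\bar\sigma(N)$. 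Rescaling and taking the maximum over $\delta \in [-1,1]$ gives condition 1(ii).

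Combining these two observations in both directions yields the equivalence. The only delicate point, and the one I expect to require care rather than ingenuity, is handling strict versus non-strict inequalities uniformly: the strict inequalities in 1(ii) and 2 match because $\bar\sigma(N) < 1$ is the sharp threshold below which no $\|\Delta_p\| \le 1$ creates singularity, and the continuity/compactness of $[-1,1]$ makes the $\max$ in 1(ii) attained, so no gap opens between the pointwise-in-$\delta$ statement and the worst-case statement. Everything else is a rewriting of Theorem~11.9 in \cite{ZDG:96}.
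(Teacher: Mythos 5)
Your proposal is correct, but it takes a more self-contained route than the paper, which proves the lemma simply by invoking Theorem~11.9 of \cite{ZDG:96} (the Main Loop / robust performance theorem) with no further argument. What you have written is essentially the standard proof of that cited theorem, specialized to the structure ${\rm diag}(\delta I_n,\Delta_p)$: the Schur-complement identity
$\det(I-M\Delta)=\det(I-\delta P_{11})\,\det\bigl(I-\tfrac{1}{\gamma}(\delta I_n\star P(j\omega))\Delta_p\bigr)$
is exactly right for $M=P(j\omega)\,{\rm diag}(I_n,I_n/\gamma)$, the reduction of the $\Delta_p$-factor to $\overline\sigma(\cdot)<\gamma$ via $\mu_{\mathbb C^{n\times n}}=\overline\sigma$ is the correct elementary fact, and your compactness remark is what closes the gap between ``no destabilizing $\Delta$ with $\|\Delta\|\le 1$'' and the strict inequality $\mu_{\bf\Delta}<1$ in the definition of $\mu$ as a reciprocal infimum. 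Two small points would make the write-up airtight: for the direction $2\Rightarrow 1(\mathrm i)$ you should note explicitly that if $I-\delta_0P_{11}$ were singular for some $\delta_0\in[-1,1]$, then $\Delta={\rm diag}(\delta_0 I_n,0)$ already satisfies $\|\Delta\|\le 1$ and $\det(I-M\Delta)=\det(I-\delta_0P_{11})=0$, forcing $\mu_{\bf\Delta}\ge 1$; and you should observe that, given well-posedness, $\delta\mapsto\overline\sigma(\delta I_n\star P(j\omega))$ is continuous on the compact set $[-1,1]$, so the pointwise bound $\overline\sigma(\cdot)<\gamma$ for each $\delta$ is equivalent to the strict bound on the maximum in 1(ii). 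What your approach buys is transparency: the reader sees exactly why the equivalence holds without unpacking the textbook theorem; what the paper's citation buys is brevity and immediate generalizability to other block structures.
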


This implies
the following:

\begin{theorem}\label{theo2} For any fixed $\omega$, the optimal value of the inner program of {\rm (\ref{lft})} is obtained with arbitrary
precision $\epsilon > 0$ as  the value
of the one-dimensional optimization program
\begin{eqnarray}
\label{eqpg1}
\begin{array}{ll}
\mbox{\rm minimize} & \gamma \\
\mbox{\rm subject to} & \mu_{\bf \Delta}\left(P(j\omega) \begin{bmatrix} I_n&0\\0&I_n/\gamma  \end{bmatrix}  \right) \leq 1-\epsilon
\end{array}
\end{eqnarray}
where the structured singular value $\mu_{\bf \Delta}$ is computed with respect to the block structure ${\rm diag} (\delta I_n, \Delta_p)$ with $\delta$ real, and 
$\Delta_p \in  \mathbb C^{n\times n}$.  
\hfill $\square$
\end{theorem}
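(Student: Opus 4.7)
The plan is to apply Lemma \ref{lemma1} to the inner maximization of (\ref{lft}) for fixed $\omega$, convert the strict inequality there into the non-strict constraint appearing in (\ref{eqpg1}), and then invoke continuity in the scaling parameter to justify that the approximation becomes exact as $\epsilon \downarrow 0$.

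First I would introduce the shorthand $\gamma^\star(\omega):=\max_{\delta\in[-1,1]}\overline{\sigma}(\delta I_n\star P(j\omega))$ for the optimal value of the inner program and the continuous, scalar-valued function
$$
\varphi(\gamma):=\mu_{\bf\Delta}\!\left(P(j\omega)\begin{bmatrix} I_n & 0\\ 0 & I_n/\gamma\end{bmatrix}\right),\qquad \gamma>0.
$$
Applying Lemma \ref{lemma1} with any $\gamma>\gamma^\star(\omega)$ gives $\varphi(\gamma)<1$, and conversely $\varphi(\gamma)<1$ implies both well-posedness of $\delta I_n\star P(j\omega)$ over $[-1,1]$ and the bound $\gamma^\star(\omega)<\gamma$. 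Consequently
$$
\gamma^\star(\omega)=\inf\{\gamma>0:\varphi(\gamma)<1\}.
$$

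Second, I would observe that $\varphi$ is continuous and monotonically non-increasing in $\gamma$: continuity follows because $\mu_{\bf\Delta}$ depends continuously on its matrix argument and the argument depends smoothly on $\gamma$ through the diagonal scaling; monotonicity follows from the definition of $\mu_{\bf\Delta}$, since replacing $I_n/\gamma$ by $I_n/\gamma'$ with $\gamma'>\gamma$ yields a matrix $M'$ with $\|M'\Delta\|\leq\|M\Delta\|$ for the performance block, so any $\Delta$ that makes $I-M\Delta$ singular requires at least as large a norm to make $I-M'\Delta$ singular. Hence the sublevel set $\{\gamma:\varphi(\gamma)\leq 1-\epsilon\}$ is an interval $[\gamma_\epsilon^\star,\infty)$, and $\gamma_\epsilon^\star$ is exactly the optimal value of program (\ref{eqpg1}).

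Third, monotonicity and continuity of $\varphi$ at the threshold yield $\gamma_\epsilon^\star\downarrow\gamma^\star(\omega)$ as $\epsilon\downarrow 0$. In particular, given any target precision, one obtains $\gamma_\epsilon^\star-\gamma^\star(\omega)$ smaller than it by choosing $\epsilon$ small enough, which is exactly the claim of the theorem.

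The delicate point, and the step I expect to be the main obstacle, is the continuity/monotonicity of $\varphi$, since $\mu_{\bf\Delta}$ is famously only upper semicontinuous in its argument for mixed real/complex structures. Here, however, continuity only needs to be established along the one-parameter family indexed by the positive scalar $\gamma$ and at the specific threshold value $1$, where the characterization through Lemma \ref{lemma1} provides a direct link with $\overline{\sigma}(\delta I_n\star P(j\omega))$, a genuinely continuous function of $\delta$ on the compact set $[-1,1]$; this suffices to conclude that approaching $1$ from below forces $\gamma$ to approach $\gamma^\star(\omega)$ from above, closing the argument.
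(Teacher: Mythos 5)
Your argument is correct and follows essentially the same route as the paper, which obtains Theorem \ref{theo2} directly from Lemma \ref{lemma1} (the Main Loop Theorem specialization) and introduces $\epsilon$ only to replace the strict inequality $\mu_{\bf \Delta}<1$ by a computable non-strict one. The one remark worth making is that your concern about continuity of $\mu_{\bf \Delta}$ is unnecessary: since Lemma \ref{lemma1} already gives $\varphi(\gamma)<1$ for \emph{every} $\gamma>\gamma^\star(\omega)$, choosing $\epsilon<1-\varphi(\gamma)$ makes such a $\gamma$ feasible for (\ref{eqpg1}), so $\gamma_\epsilon^\star\downarrow\gamma^\star(\omega)$ follows from the monotone nesting of the feasible sets alone, with no continuity of $\varphi$ required.
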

Since the constraint 2. in Lemma \ref{lemma1} has to be satisfied strictly in order to assure robust stability,
$\mu_{\bf \Delta}<1$ had to be replaced by $\mu_{\bf \Delta}\leq 1-\epsilon$ in program (\ref{eqpg1}) for an arbitrarily small $\epsilon > 0$.

It is well-known that the evaluation of the structured singular value $\mu_{\bf \Delta}$ is in general NP-hard \cite{toker1995spl,braatz1994computational},
so that the constraint in (\ref{eqpg1}) may appear intractable.  This is why $\mu_{\bf \Delta}$ is in usually replaced by 
its $\mu$-upper bound $\overline{\mu}_{\bf \Delta}(M)$,
where in general only $\mu_{\bf \Delta} <  \overline{\mu}_{\bf \Delta}$.
However, there are five cases, where the upper bound is exact, and presently we have one of these five, because
${\bf \Delta}$ consists of only one repeated real block and a single full complex block; see \cite[p. 282]{ZDG:96}.
See also the elegant derivation in \cite{MSF:1997}. This means the constraint in (\ref{eqpg1}) is computable exactly by a linear matrix inequality or a convex SDP. We have
\begin{theorem}\label{theo3}
For fixed $\omega$, the optimal value of the inner optimization program in {\rm (\ref{lft})} may be obtained by the following
convex semi-definite program (SDP):
\begin{equation}
\label{eqpg2}
\begin{array}{ll}
\mbox{\rm minimize}& \gamma \\
\mbox{\rm subject to} &   X,Y \in \mathbb C^{n\times n}, X= X^H, Y^H = -Y ,\gamma \in \mathbb R\\
&
\!\!\!\!\!\!\!\!\!\!\!\!\!\!\!\!\begin{bmatrix}\bullet \end{bmatrix}^H
\begin{bmatrix}X & 0 & Y & 0 \\
0 & I_n & 0 & 0 \\
Y^H & 0 & -X & 0 \\
0 &0 &0 &-\gamma^2 I_n \end{bmatrix} \begin{bmatrix} P(j\omega)\\ I_{2n}\end{bmatrix} \preceq -\epsilon I
\end{array}
\end{equation}
\end{theorem}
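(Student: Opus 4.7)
The plan is to chain Theorem~\ref{theo2} with the known exactness of the $\mu$-upper bound for our block structure $\mathbf{\Delta}$. By Theorem~\ref{theo2}, for fixed $\omega$ the inner program of (\ref{lft}) is already expressed as the one-dimensional minimization of $\gamma$ subject to $\mu_{\mathbf{\Delta}}\bigl(P(j\omega)\,\mathrm{diag}(I_n,I_n/\gamma)\bigr)\leq 1-\epsilon$, so I would start there. Since $\mathbf{\Delta}$ consists of exactly one repeated real scalar block and one full complex block, we are in one of the five classical configurations where $\mu_{\mathbf{\Delta}}=\overline{\mu}_{\mathbf{\Delta}}$ exactly, see \cite[p.~282]{ZDG:96} and \cite{MSF:1997}; this lets me replace the NP-hard constraint by the Fan--Tits--Doyle scaled upper bound
\[
M^H D M + j(GM - M^H G^H) - D \preceq -\epsilon I,
\]
to be solved in scalings $D$ and $G$ commuting with $\mathbf{\Delta}$ and evaluated at $M=P(j\omega)\,\mathrm{diag}(I_n,I_n/\gamma)$.

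Next I would instantiate these scalings for our specific structure. Elements of $D$ that commute with the repeated real block $\delta I_n$ are arbitrary Hermitian matrices $X=X^H$, while those commuting with the full complex block $\Delta_p\in\mathbb{C}^{n\times n}$ must be scalar multiples of $I_n$, so $(D,G)$-homogeneity of the LMI allows the normalization $D=\mathrm{diag}(X,I_n)$. The $G$-scaling carrying the realness of $\delta$ has the form $G=\mathrm{diag}(Y,0)$ with $Y^H=-Y$, its trailing block vanishing because $\Delta_p$ is complex. Plugging these choices into the displayed LMI and absorbing the right-hand scaling $\mathrm{diag}(I_n,I_n/\gamma)$ so that $\gamma$ only enters through a $-\gamma^2 I_n$ block in the south-east corner, a direct block-by-block expansion of the quadratic form identifies the result with the compact expression $[\bullet]^H\,\Theta\,[\bullet]\preceq -\epsilon I$ of (\ref{eqpg2}), with $\Theta$ precisely the central weight matrix displayed there.

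The main obstacle is notational rather than conceptual: one has to keep track of the sign pattern of $Y$ versus $Y^H$, and of the placement of $X$, $-X$, $I_n$, and $-\gamma^2 I_n$ in the weight, so as to recognize them as the Hermitization of the $G$-terms combined with the $1/\gamma$ absorption. The deeper ingredient $\mu_{\mathbf{\Delta}}=\overline{\mu}_{\mathbf{\Delta}}$ is imported from the cited literature rather than reproved, and everything else is a mechanical verification.
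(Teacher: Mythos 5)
Your proposal is correct and follows essentially the same route as the paper: chain the $\mu$-reformulation of Theorem \ref{theo2} with the exactness of the upper bound for one repeated real block plus one full complex block, and recognize (\ref{eqpg2}) as the standard $D$--$G$ scaled upper-bound SDP with the scalings instantiated for this structure. The paper's proof is terser (it simply cites the upper-bound LMI and the exactness result), whereas you spell out the commuting scalings and the $1/\gamma$ absorption explicitly, but the argument is the same.
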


\begin{proof} The cast (\ref{eqpg1}) is a direct consequence of the Main Loop Theorem \cite{ZDG:96}. Program (\ref{eqpg2}) 
computes the $\mu_{\bf \Delta}$ upper-bound \cite{doyle91-1,ZDG:96}, but since
for the specific block structure involving one repeated parameter $\delta$ and a single 
complex full block the upper bound is exact, this now coincides with the true value of $\mu_{\bf \Delta}$  \cite{MSF:1997}.
\hfill $\square$
\end{proof}

Since program (\ref{eqRob3}) can be solved exactly at any given frequency, one is left with a search over the frequency axis.  A
straightforward idea would appear to be frequency gridding, but a more advisable approach is based on dividing the frequency axis 
into intervals, on each of which the Hamiltonian test can be applied  \cite{ferreres2003robustness,Lawrence:2000,sideris1990elimination}. 

In summary, the above results show that the Kreiss constant can be computed to any prescribed accuracy using fairly standard robust analysis techniques. 
\\

\noindent {\bf Example.}
As simple test set, we consider Grcar (named after Joseph Grcar)  matrices of various dimensions and estimate the Kreiss constant
using either the method of Theorem \ref{theo1} or the one in Theorem in \ref{theo3}. 
The Grcar matrices considered here are band-Toeplitz  matrices with the first subdiagonal and main diagonal set to $-1$   and $3$ superdiagonals set to $1$ and zero entries elsewhere. Such matrices are known to possess very sensitive eigenvalues and therefore deviate from normality. 

\begin{table}[htbp]
\begin{center}
\caption{Kreiss constant estimates and running times (sec.) based on  Theorems \ref{theo1} and \ref{theo3}. I: impractical \label{tabGrcar}}
\begin{tabular}{||c||c|c||c|c||}
\hline\hline
size &  \multicolumn{2}{c||}{method of Theorem \ref{theo1} }  & \multicolumn{2}{c||}{method of Theorem \ref{theo3}} \\ \hline
 & estimate & cpu & estimate & cpu  \\ \hline
10  & 1.1855e+00   &   2  & 1.1881e+00 &  2 \\ \hline
20  & 2.7199e+00   &   4  & 2.7255e+00 &  68 \\ \hline
30  & 8.7803e+00   &   7  & 8.7989e+00 &  720 \\ \hline
40  & 3.3155e+01   &   12  & 3.3223e+01 &  6800 \\ \hline
50  & 1.3548e+02   &   22  & 1.3577e+02  & 30968 \\ \hline
100 & 2.4837+e05   &   127 & I & I \\ \hline\hline
\end{tabular}
\end{center}
\end{table}
Estimates of the Kreiss constant for problems of increasing size are given in table \ref{tabGrcar}.
We observe that while the worst-case $H_\infty$-norm approach in Theorem \ref{theo1} is operational for medium size problems, the $\mu$ certificate
based on Theorem \ref{theo3} becomes quickly impractical which is an incentive to develop dedicated  methods. For the case $n = 50$, the $H_\infty$ norm vs. $\delta$  and the transient growth $\|e^{At}\|$ are presented in Fig. \ref{figHTG}. Note the shape and peak value $135.5$ of the left curve in Fig. \ref{figHTG} are consistent with the results in \cite{mengi2006measures} based on  $\displaystyle f(\epsilon):= \alpha_\epsilon\slash\epsilon$ with estimated peak value  of $133.6$. 

\begin{figure}[htp]\label{HTG}
\centering
\includegraphics[width = 0.45\columnwidth]{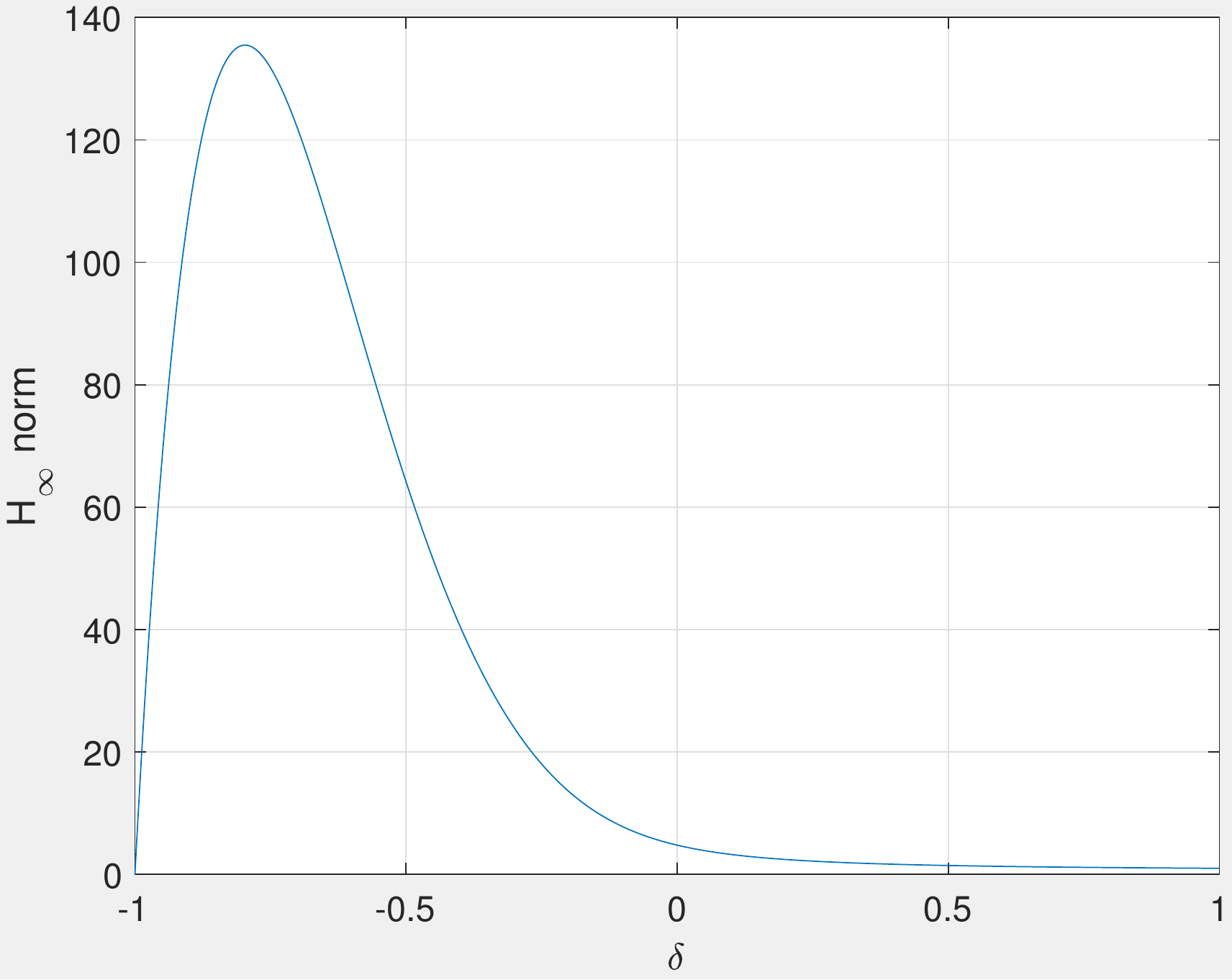}$\;$
\includegraphics[width = 0.45\columnwidth]{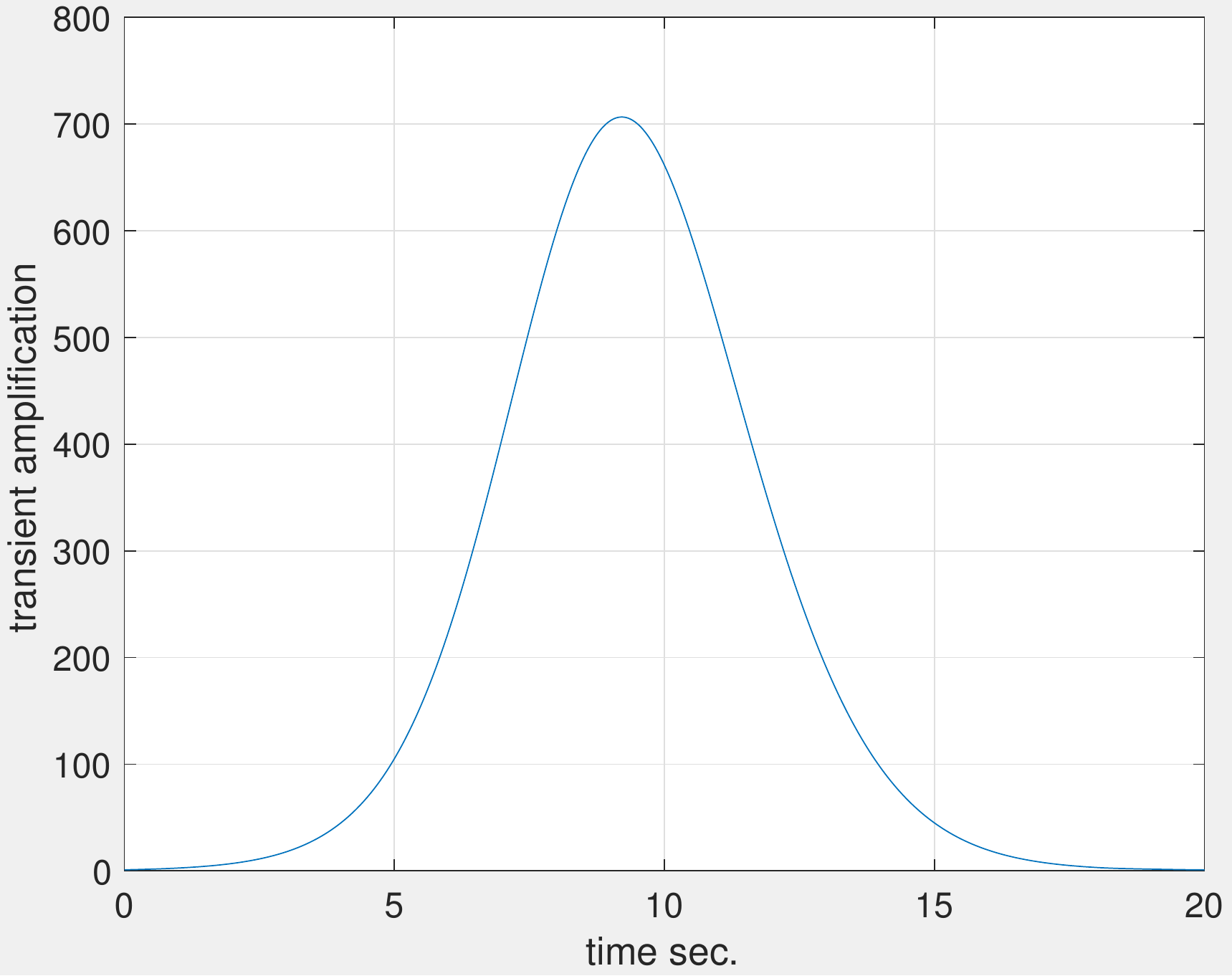}
\caption{Left: $H_\infty$ norm vs. $\delta$,$\qquad$ Right: Transient growth} 
\label{figHTG}
\end{figure}



\section{Feedback control of transient growth \label{sect:feedback}}
In this section, we further explore the Kreiss constant and its link to transient growth 
by employing feedback to reduce it in closed loop.  
Consider a plant $G(s)$ with control inputs $u\in \mathbb R^m$ and outputs measurements $y\in \mathbb R^p$:
\begin{align}\label{eq-ABCD}
\begin{split}
\dot x & =  A x + B u,\qquad x \in \mathbb R^n\\ 
y & =  C x + D u\,,
\end{split}
\end{align}
in loop with either a static feedback controller $K \in \mathbb R^{m\times p}$ giving $u = K y$, or
a dynamic output-feedback controller $K(s)$ giving 
\begin{align}
\begin{split}
\dot x_K & =  A_K x_K+ B_K y,\qquad x_K \in \mathbb R^{n_K} \\
u & =  C_K x_K + D_K y\,.
\end{split}
\end{align}
We make the  assumption $D=0$, which incurs no loss of generality, while considerably simplifying the presentation. 
The closed-loop autonomous system is described as
$$
\dot x_{cl} = A_{cl} x_{cl},\quad x_{cl}(0) = x_{cl}^0\,,
$$
with state matrix $A_{cl}$ obtained in both cases  as
$$
A_{cl} = A + B K C \mbox{ or }\quad  A_{cl}=\begin{bmatrix}
A + B D_KC & B C_K \\ B_K C & A_K
\end{bmatrix}\,.
$$
The transient growth of the closed loop may now be
assessed either by $M_0(A_{cl})$, or by concentrating on the plant state trajectories $x(t)$ generated by initial conditions $x_0$. 
The latter are described by
\begin{align*}
\mathcal M_0(A_{cl}) &= 
\sup_{t\geq 0} \max_{\|x_0\|=1} \|
J^T
e^{A_{cl}t}J 
x_0 \| = \sup_{t\geq 0} \|J^T
e^{A_{cl}t}J \|
\end{align*}
where $J:=I_n$ for a static output feedback controller and $J:= [I_n,\,0]^T$ for a dynamic output-feedback controller. 
Clearly $M_0(A_{cl}) = \mathcal M_0(A_{cl})$ for static controllers. Note that
$\mathcal M_0(A_{cl})$ is generally not the same as $M_0(J^T A_{cl}J)$. 
The inequality $\mathcal M_0(A_{cl}) \leq M_0(A_{cl})$ follows from $\|J\|\leq 1$, so that
$\mathcal M_0(A_{cl}) \leq 1$ if  $e^{A_{cl} t}$ is  a contraction. Note, however, that we are not primarily interested in rendering $e^{A_{cl} t}$
contractive. Instead, we want to control the amplification of the $x$-part of the closed loop trajectories, so
that $\mathcal M_0(A_{cl})=1$ may occur even for non-contractive  $A_{cl}$.\newline

\noindent {\bf Example.}
A simple illustration of this possibility  is $A_{cl} = \begin{bmatrix}
-2 &0;3 & -1 \end{bmatrix}$  where for $J^T=[1\; 0]$, $\mathcal M_0(A_{cl})= \sup_{t\geq 0}\|J^T e^{A_{cl} t} J\|= \sup_{t\geq 0} |e^{-2t}| = 1$ 
whereas $M_0(A_{cl}) > 1$ because $A_{cl}$ has positive numerical abscissa $\omega(A_{cl})>0$, i.e., does not generate a contraction; see Lemma \ref{lemma2}.

Similarly, to assess the transient behavior of the closed loop, we may either use the Kreiss constant 
$K(A_{cl})$ directly, or again its restriction to the plant states only, by introducing
$$
\mathcal K(A_{cl}):= \max_{{\rm Re} (s) > 0}\; {\rm Re} (s) \, \|J^T(s I_{n+n_K}-A_{cl})^{-1}J \| \,,
$$
which in view of Theorem \ref{theo1} and the definition of $J$ above is expressed as
\begin{equation}\label{eqRobCL}
\mathcal K(A_{cl}) = \max_{\delta \in [-1,\,1]} \left\| J^T \left(s I - \left( \textstyle\frac{1-\delta}{1+\delta}  A_{cl} -I\right)\right)^{-1} J\right\|_\infty \,.
\end{equation}
For any fixed controller  this can be computed with the tools in Theorems \ref{theo1} and \ref{theo3}. 
For static controllers, $\mathcal K(A_{cl})=K(A_{cl})$, and clearly $\mathcal K(A_{cl}) \leq K(A_{cl})$ in general because of
$\|J\|\leq 1$.

Note that the analogue of the Kreiss matrix theorem for $\mathcal K(A_{cl})$ is obtained with little effort:
\begin{lemma}
\[
\mathcal K(A_{cl}) \leq \mathcal M_0(A_{cl}) \leq en \,\mathcal K(A_{cl}).
\]
\end{lemma}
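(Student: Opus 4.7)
The plan is to replay the two halves of the classical continuous-time Kreiss matrix theorem on the sandwiched rational matrix-valued function $F(s) := J^T (sI-A_{cl})^{-1} J$ in place of $(sI-A)^{-1}$. By stability of $A_{cl}$, $F$ is analytic in the closed right half-plane, and by the very definition of $\mathcal{K}(A_{cl})$ it obeys the Kreiss-type bound $\mathrm{Re}(s)\,\|F(s)\| \le \mathcal{K}(A_{cl})$ for $\mathrm{Re}(s) > 0$.

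For the lower bound, I would use the Laplace-transform identity
$$F(s) = \int_0^\infty e^{-st}\, J^T e^{A_{cl}t} J \,dt, \qquad \mathrm{Re}(s)>0,$$
which is valid because $A_{cl}$ is stable. Pulling the norm inside the integral and majorizing $\|J^T e^{A_{cl}t} J\| \le \mathcal{M}_0(A_{cl})$ yields $\|F(s)\| \le \mathcal{M}_0(A_{cl})/\mathrm{Re}(s)$; multiplying by $\mathrm{Re}(s)$ and taking the supremum delivers $\mathcal{K}(A_{cl}) \le \mathcal{M}_0(A_{cl})$.

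For the upper bound, I would invert the Laplace transform via the Dunford contour representation
$$J^T e^{A_{cl}t} J = \frac{1}{2\pi i}\oint_\Gamma e^{st}\, F(s)\, ds$$
with $\Gamma$ any contour enclosing the spectrum of $A_{cl}$, and then reproduce the Spijker-type contour-deformation estimate used in the proof of the classical continuous Kreiss matrix theorem \cite{trefethen2005spectra}. The ingredients the classical proof really uses are only (i) the analyticity of the integrand in $\mathrm{Re}(s)\ge 0$ and (ii) the resolvent-type bound $\mathrm{Re}(s)\|F(s)\|\le\mathcal{K}(A_{cl})$, both of which hold verbatim for $F$, so the argument transplants directly.

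The main obstacle is the bookkeeping of the dimension constant. In the static-feedback case $n_K = 0$ the statement is literally the classical Kreiss matrix theorem and the constant $en$ is immediate. In the dynamic case, a direct application of Spijker's lemma produces a constant proportional to the McMillan degree of $F$, which is $n+n_K$ in general; obtaining the sharper constant $en$ with $n$ the plant state dimension requires exploiting that $F$ has outer size $n\times n$, so that the Spijker arc-length estimate should be applied to the $n\times n$-valued integrand rather than to a realization of size $n+n_K$. Verifying that this refinement actually yields $en\,\mathcal{K}(A_{cl})$---rather than the weaker $e(n+n_K)\,\mathcal{K}(A_{cl})$---is the delicate step of the argument.
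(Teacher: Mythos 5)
Your proposal follows the same route as the paper for both inequalities: the lower bound is exactly the paper's Laplace-transform argument ($F(s)=\int_0^\infty e^{-st}J^Te^{A_{cl}t}J\,dt$, norm inside, integrate $e^{-\mathrm{Re}(s)t}$), and the upper bound is the same LeVeque--Trefethen contour representation with an integration by parts, the choice $\mathrm{Re}(s)=1/t$, and Spijker's arclength bound applied to the scalar function $q(s)=u^TJ^T(sI-A_{cl})^{-1}Jv$. The one point where you go beyond the paper is the dimension constant, and there your proposed fix does not work: Spijker's lemma bounds $\|q'\|_1$ by $2\pi$ times the \emph{degree} of the scalar rational function $q$ (times $\|q\|_\infty$), and that degree is governed by the McMillan degree of $F$, i.e.\ by the denominator $\det(sI-A_{cl})$ of degree $n+n_K$; the fact that $F$ has outer size $n\times n$ does not reduce the degree of $u^TFv$, so there is no ``matrix-valued Spijker'' refinement to appeal to. Consequently this argument honestly yields $\mathcal M_0(A_{cl})\le e(n+n_K)\,\mathcal K(A_{cl})$ in the dynamic-feedback case (with $en$ only when $n_K=0$, or under pole--zero cancellation in the $J$-channels). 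The paper itself glosses over this by writing $\|q'\|_1\le 2\pi n\|q\|_\infty$ without specifying which $n$, so you have correctly identified the one delicate point --- you should simply state the bound with the closed-loop dimension rather than chase the sharper constant, which this method does not deliver.
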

\begin{proof}
For the left hand inequality, we take
\begin{align*}
    \|J^T (sI-A_{cl})^{-1} J \|&= \left\|\int_0^\infty e^{-st} J^T e^{A_{cl}t} J dt \right\|\\
    &\leq  \sup_{t\geq 0} \|J^T  e^{A_{cl}t} J\| \int_0^\infty e^{-{\rm Re}(s)t} dt
    = \mathcal M_0(A_{cl}) {\rm Re}(s)^{-1}.
\end{align*}
For the upper-bound,  we follow the argument in \cite{leveque1984resolvent} improved by \cite{spijker1991conjecture}. We have
for two test vectors $u,v$
\begin{align*}
    u^TJ^Te^{A_{cl}t}Jv &= \frac{1}{2\pi i} \int_{{\rm Re}(s)=\mu} \!\!\!\! e^{st} \underbrace{u^TJ^T(sI-A_{cl})^{-1} Jv}_{=: q(s)} ds \\
    &= 
    - \frac{1}{2\pi i} \int_{{\rm Re}(s)=\mu}
    \frac{e^{st}}{t} q'(s) ds 
    =
    -\frac{1}{2\pi i} \frac{e^{\mu t}}{t} \int_{\omega=-\infty}^{+\infty} e^{i \omega t} q'(\mu+i\omega) i d\omega.
\end{align*}
Hence if we let ${\rm Re}(s)=\mu = 1/t$ and take norms
\begin{align*}
    \left\|u^TJ^T e^{A_{cl}t}J  v \right\| &\leq  \frac{e}{2\pi} \frac{1}{t} \int_{-\infty}^\infty  |q'(1/t+i\omega)| d\omega 
    = \frac{e}{2\pi} {\rm Re}(s) \|q'({\rm Re}(s) + i \cdot)\|_1.
\end{align*}
Now \cite{spijker1991conjecture} improves the estimate of \cite{leveque1984resolvent}
to the extent that $\|q'\|_1 \leq 2\pi n \|q\|_\infty$, hence we get
\begin{align*}
|u^T  J^T e^{A_{cl}t}Jv| 
&\leq en {\rm Re}(s) \sup_{\omega} |u^TJ^T (({\rm Re}(s)+i \omega)I-A_{cl})^{-1} Jv| \\
&\leq en \sup_{{\rm Re}(s) > 0} {\rm Re}(s) | u^TJ^T (sI -A_{cl})^{-1} Jv|
\end{align*}
and since  $u,v$ are arbitrary, we get the right-hand estimate $\mathcal M_0(A_{cl}) \leq e\,n \, \mathcal K(A_{cl})$.
\hfill $\square$
\end{proof}

For the purpose of feedback synthesis, we have decided against the use of design techniques based on the LMI   characterization  in (\ref{eqpg2}). 
The reason is that the size of the scaling matrices $X$ and $Y$ grows as $O((n+n_K)^2)$ for an output feedback controller 
of order $n_K$
and most SDP solvers will succumb beyond $50$ states. The LMI approach (\ref{eqpg2}) shall be used only for certification. 
More precisely, once a controller has been synthesized, a lower bound of $\mathcal K(A_{cl})$ is obtained by the local optimizer. 
The exact value of $\mathcal K(A_{cl})$ 
at the final controller is then 
re-computed
via the methods of section \ref{sect:exact}, and thereby certified. 
Our experiments show that certification is practically always redundant, which corroborates what was already 
observed for the rich test sets in \cite{ANR:2015,AN:2015},
where uncertainty in several parameters and complex blocks was considered.

For synthesis,
we privilege the worst-case approach in (\ref{eqRob}) applied in closed loop. This leads to the min-max synthesis program
%
%
\begin{align}
\label{eqsynth}
\begin{array}{ll}
\displaystyle\mbox{minimize} & \displaystyle\max_{\delta \in [-1,1]} \left\| J^T \left( sI- \left(\textstyle \frac{1-\delta}{1+\delta} A_{cl}(K)-I \right)\right)^{-1} J \right\|_\infty\\
\mbox{subject to}& K \mbox{ stabilizing},  K \in \mathscr K \,,
\end{array}
\end{align}
where $K\in \mathscr K$ denotes a prescribed controller structure. 
This could for instance be PIDs, observed-based or low-order controllers, decentralized controllers,
as well as control architectures  assembling simple control components. 
Note that the stabilizing constraint on $K$ in (\ref{eqsynth})  enforces stability of the whole set of matrices 
$\left\{\frac{1-\delta}{1+\delta} A_{cl}-I: \;\delta \in [-1,1]\right\}$,  and in particular, that of
$A_{cl}(K)$ as desired.

In some cases it may be advisable to add further specifications on the
closed loop in (\ref{eqsynth}). Those may concern the parametric robust loop, the nominal loop, or elements of the loop,
like $K$, which would allow to distinguish further among multiple solutions of (\ref{eqsynth}).\\

\section{Algorithm \label{sect:optim}}
Using standard state augmentations
$${\small \begin{array}{l}
A_a = \begin{bmatrix} A & 0\\0 & 0_{n_K}\end{bmatrix}, \;
B_a = \begin{bmatrix} 0 & B \\ I_{n_K}&0\end{bmatrix},  \;
C_a = \begin{bmatrix} 0 & I_{n_K} \\ C & 0\end{bmatrix}, 
K_a = \begin{bmatrix} A_K & B_K \\ C_K & D_K\end{bmatrix}\,, x_a = \begin{bmatrix} x \\ x_K \end{bmatrix}\,,
\end{array}
}$$
and exploiting the open-loop state-space representation of $P$ in (\ref{eqP}),  the closed-loop system in program (\ref{eqsynth}) can be rewritten  in LFT form 
as $\delta I_{n+n_K} \star P_a \star K_a$ where $P_a$ has the state-space representation 
\begin{equation*}
P_a(s): \quad \left\{
\begin{aligned}
\dot x_a & =  (A_a - I_{n+n_K}) x_a + \sqrt{2} w_{\delta} + J w + B_a u \\
z_\delta & =  -\sqrt{2} A_a x_a - w_\delta - \sqrt{2} B_a u \\
z & =  J^T x_a \\
y & =  C_a x_a \,. 
\end{aligned}\right.
\end{equation*}
This means program (\ref{eqsynth}) may be recast as
\begin{equation}
\label{program}
\min_{K\in \mathscr K} \max_{\delta \in [-1,1]} \| \delta I \star P_a\star K_a\|_\infty.
\end{equation}

Note that program (\ref{eqsynth}), (\ref{program}) has three sources of non-differentiability. For fixed $\delta$ the $H_\infty$-norm
$\|\delta\star P_a\star K_a\|_\infty$ already
is non-smooth (a) due to the maximum singular value $\overline{\sigma}$, and (b) due to the semi-infinite maximum over
the frequency range $\omega\in [0,\infty]$. To this we have to add (c), the semi-infinite maximum over $\delta\in [-1,1]$, which
is the severest difficulty, because here a non-concave maximum has to be computed globally. 
To overcome this difficulty, we use the method of \cite{noll2019cutting,AN:2015,ANR:2015},
which we now briefly recall.

The basic idea is to select a small but representative set of scenarios
$\delta_\nu \in [-1,1]$, $\nu=1,\dots,N$, such that the multi-model $H_\infty$-synthesis program
\[
\min_{K\in \mathscr K}  \max_{\nu=1,\dots,N} \|\delta_\nu I \star P_a \star K_a\|_\infty
\]
gives an accurate estimation of the optimal value of (\ref{eqsynth}), resp. (\ref{program}).
This hinges on an intelligent selection of these worst-case scenarios, which we achieve by the scheme shown in Fig. \ref{fig-scheme}.
\begin{algorithm}[h!]
\title{Parametric robust synthesis}
\begin{algorithmic}[1]
\STEP{Initialize} Put $I=\{0\}$ and go to multi-model design.
\STEP{Multi-model} Given finite set $I \subset [-1,1]$ of scenarios, perform multi-model
$H_\infty$ (or $H_2$) synthesis
\[
h_*= \min_{K \in \mathscr K} \max_{\delta \in I}\|\delta I \star P_a \star K_a\|_{\infty,2}
\]
and obtain multi-scenario controller $K^*\in \mathscr K$.
\STEP{Destabilize} Compute worst-case scenario $\delta^* \in [-1,1]$ by solving
$$
\alpha^*=\max_{\delta\in [-1,1]} \alpha\left(\delta I \star P_a \star K_a^*\right).
$$
If $\delta^* I \star P_a \star K_a^*$ is unstable $(\alpha^* \geq 0)$, add $\delta^*$ to bad scenarios $I$ and go back to step 2.
Otherwise $(\alpha^* < 0)$ continue.
\STEP{Degrade} Compute worst-performance scenario $\delta^* \in [-1,1]$ by solving
$$
h^* = \max_{\delta\in [-1,1]} \|\delta I \star P_a \star K_a^*\|_{\infty, 2}.
$$
\STEP{Stopping}
If $h^* < (1+{\rm tol})h_*$ degradation is only marginal, then  accept $K_a^*$ and goto post-processing. Otherwise
add $\delta^*$ to bad scenarios $I$ and go back to step 2.
\STEP{Certify} Use method of section \ref{sect:exact} to certify final value $h^*$.
\end{algorithmic}
\end{algorithm}
The multi-scenario synthesis is performed efficiently using the method of \cite{AN:2015}, implemented in the MATLAB facility {\tt systune}. 
The fact that only a one-dimensional uncertainty cube is at work here,  as compared to the case of  \cite{noll2019cutting,AN:2015,ANR:2015},
is of course favorable in the present situation,
leading to fast and reliable estimates. The crucial observation is that programs $\alpha^*, h^*$ are of max-max type, whereas $h_*$ is
of min-max type. For more detail on how these characteristic differences are exploited algorithmically, see \cite{AN:2015}.

\begin{figure}[htbp]
\centering
\includegraphics[width = 0.8\columnwidth]{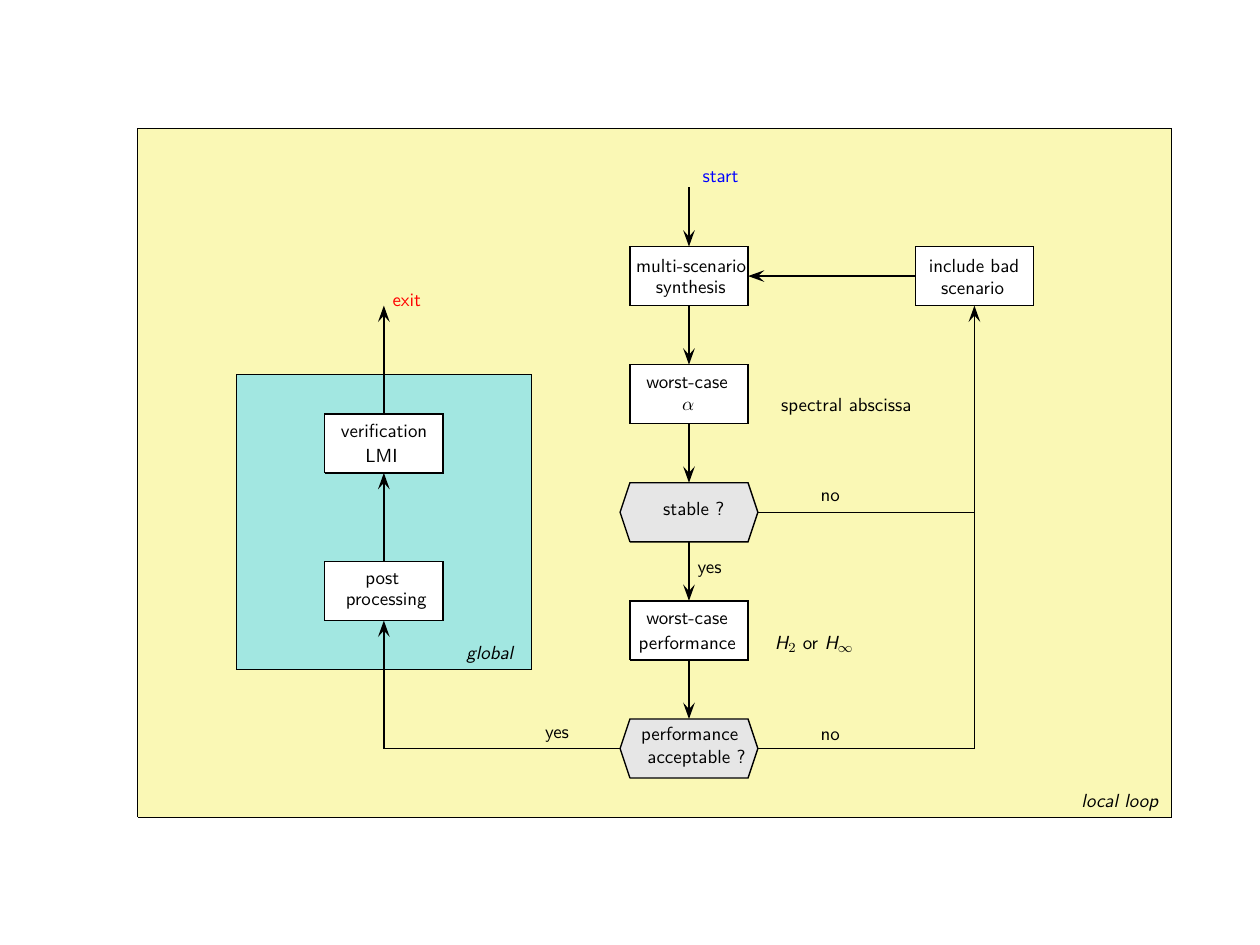}
\caption{Iterative selection  of bad scenarios such that multi-model synthesis for these covers the full uncertain range.} 
\label{fig-scheme}
\end{figure}

\section{Applications \& competing methods \label{sect:numerics}}
In this section, we consider minimization of the Kreiss constant in closed loop. The results are then compared
to a variety of other techniques, also allowing to reduce the effect of transients, possibly by less direct means. 
We work with an
example borrowed from \cite{whidborne2007minimization}. State-space data of the plant $G(s)=C (sI-A)^{-1}B$ in (\ref{eq-ABCD}) are given as

\begin{equation}\label{eqA}
{\small 
A = \begin{bmatrix}
-1      &     0      &     0       &    0    &       0     &      0    &    -625 \\
           0   &       -1    &     -30    &     400    &       0    &       0   &      250\\
          -2     &      0   &       -1    &       0    &       0     &      0      &    30\\
           5     &     -1        &   5     &    -1    &       0      &     0   &     200\\
          11     &      1    &      25    &     -10    &      -1    &       1  &      -200\\
         200     &      0      &     0     &   -150    &    -10^2   &       -1    &   -10^3\\
           1     &      0      &     0      &     0     &      0    &       0    &      -1
\end{bmatrix} }
\end{equation}
\begin{equation}\label{eqBCD}
{\small B = \begin{bmatrix}
I_4\\ 0_{3\times 4} 
\end{bmatrix}, \\
C = \begin{bmatrix} 0 &0 &0 &0 &0 &1 &0\end{bmatrix},\; D = 0_{1\times4}\,}.
\end{equation}
The plant has therefore four control inputs and a single measurement. 

In \cite{whidborne2007minimization}, the problem of transient growth minimization is approached using LMI techniques.
Large signal amplifications are constrained by reducing the eccentricity of the Lyapunov level-curves, 
where  Lyapunov function candidates are chosen as  quadratic functions $V(x) = x^T P x$. This is implemented as reducing the condition number of $P$, 
that is, minimizing $\gamma$ subject to $I \preceq P \preceq \gamma I$ in combination with additional closed-loop stability
constraints.  The Lyapunov derivative condition $\frac{d}{dt}V(x)\leq 0$ over all state  trajectories then ensures $x(t)\in\{\zeta \in \mathbb R^n |\zeta^T P \zeta \leq 1\}$ at all times $t\geq 0$, and for all initial conditions in that same set. 
The synthesis problem can be converted to a convex SDP at the price of using the Youla parameterization of stabilizing controllers \cite{BoB:91}.
This leads to controllers of the form
$K(s) = (I+Q(s)) G(s) Q(s)$ with $Q(s)$ the Youla parameter optimized over a finite Ritz basis subspace in $RH_\infty$. Controllers computed using this technique have order $n+ 2n_Q$, where $n_Q$ is the state dimension of $Q(s)$. In \cite{whidborne2007minimization}, a controller of order $7 + 2\times 9= 25$ was obtained with corresponding transient growth of 
$\sup_{t\geq 0} \| J^T e^{A_{cl}t} J\|=\sqrt{11919}= 109.2$. 

To allow for unbiased comparisons, all techniques discussed in the sequel are implemented
in their native formulation. In more practical applications, design programs should be complemented with more conventional control requirements such as  robust stability  margins or noise attenuation. The only exception to this rule is a constraint on the closed-loop spectrum as shown in Fig. \ref{fig-Disk}, to avoid excessively slow responses or much too high gain controllers. The latter constraint is of paramount importance, 
since pure performance design problems as in (\ref{eqsynth}) tend to generate unacceptable high-gain controllers. 

We have used restarts to improve local solutions. The very same $10$ starting points have been used for all techniques described in the sequel. The best over the $10$ local solutions is then retained for simulation and assessment. 
The controller structure $\mathscr K$ is specified as the set of $3rd$-order  controllers
for all approaches, which leads to 28 unknowns. 

All results are assessed via comparison with the open-loop transient growth $\sup_{t\geq 0}\|e^{A t}\|$ shown in Fig. \ref{fig-OL} (left).

\begin{figure}[htp]
\centering
\includegraphics[width = 0.5\columnwidth]{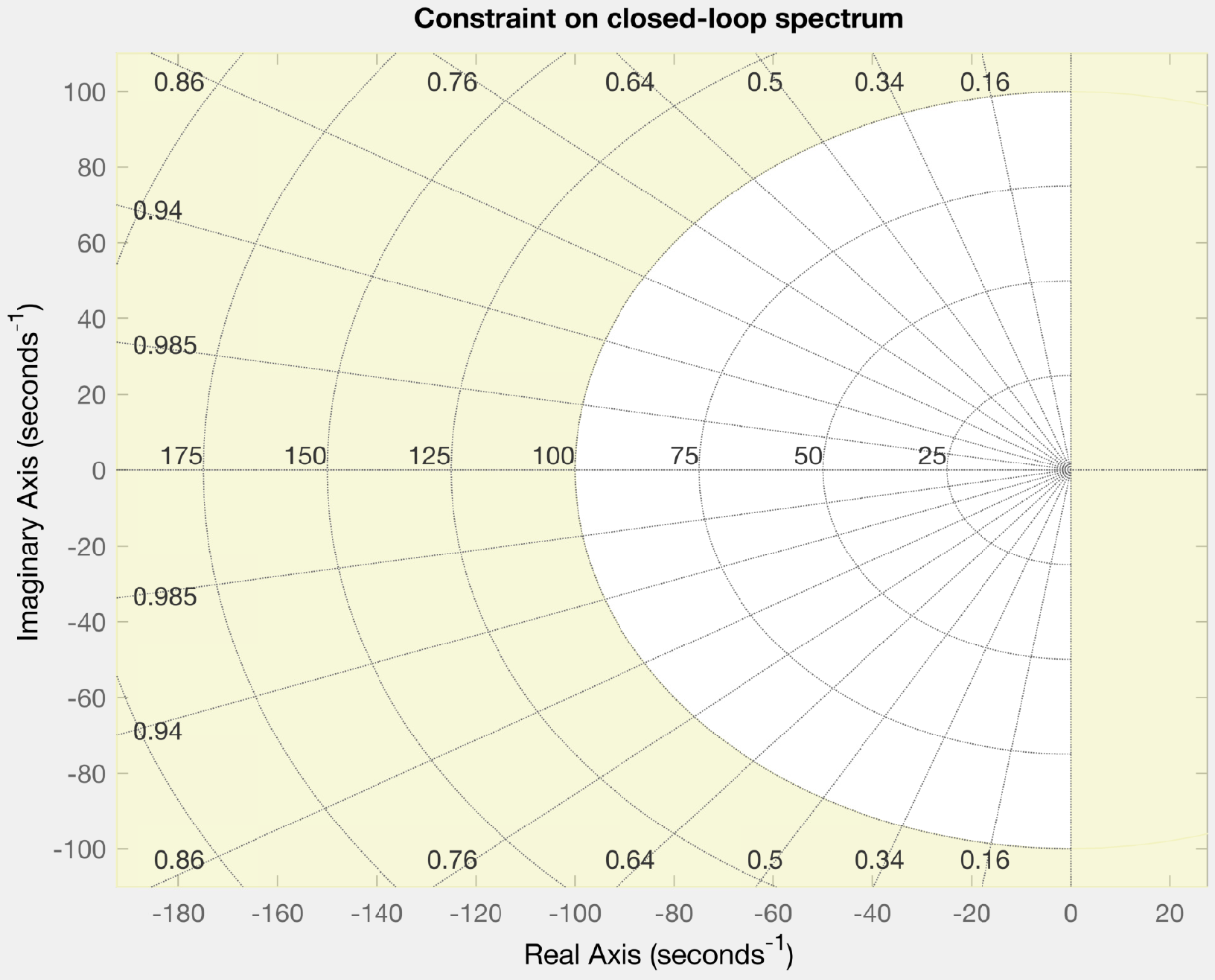}
\caption{Disk $D$ of closed-loop spectrum constraint  minimum decay of $0.001$ and disk constraint of radius $100$.} 
\label{fig-Disk}
\end{figure}

\subsection{Kreiss constant approach\label{subsec-Kreiss}}
For minimization of the Kreiss constant in feedback loop, the cast in (\ref{eqsynth}) is changed as  

\begin{align}
\label{eqsynth2}
\begin{array}{ll}
\displaystyle\mbox{minimize} & \displaystyle\max_{\delta \in [-1,1]} \left\| J^T \left( sI- \left(\textstyle \frac{1-\delta}{1+\delta} A_{cl}(K)-I \right)\right)^{-1} J \right\|_\infty\\
\mbox{subject to}& K \mbox{ stabilizing},  K \in \mathscr K \\
& \sigma(A_{cl}(K)) \in D \,,
\end{array}
\end{align}
with $\sigma(A_{cl}(K))$ denoting the spectrum of $A_{cl}(K)$.

The best controller over $10$ restarts is obtained as 
$$K(s)= \footnotesize
\left[\begin{array}{lll|l}                                
-42.9038 & 11.5813 & 0.0000 & 0.0128 \\      
-164.9255 & 70.3235 & 152.7735 & -13.6539 \\ 
0.0000 & -25.9407 & -149.4428 & 11.8197 \\  \hline 
-167.0674 & 318.3261 & 809.8411 & -66.1531 \\
200.4722 & 413.5407 & -666.0200 & 72.5131 \\ 
-66.2768 & 27.6020 & 76.9643 & -2.4021 \\    
385.9815 & -189.8190 & -229.6792 & 22.6246 
\end{array}  \right]                              
$$
with the standard notation 
$$ K(s) = C_K(sI_{n_K} - A_K)^{-1} B_K + D_K = 
\left[\begin{array}{l|l} 
A_K & B_K \\ \hline C_K & D_K\end{array}  \right] \,,
$$
and its transient growth is shown in Fig. \ref{fig-OL} (right), 
with a peak value of $42.8$. This improves over the higher-order LMI controller of \cite{whidborne2007minimization}, which achieves 109.2. Our solution gives a 
reduction by one order of magnitude  over the open-loop transient growth $680.4$ displayed in Fig. \ref{fig-OL} (left). 
The closed-loop Kreiss constant  computed via program (\ref{eqsynth2})
is $10.90$, which we certified as $10.91$ using the exact approach in Theorem \ref{theo3}. 
Program (\ref{eqsynth2}) was solved using {\it systune} based on \cite{AN:2015,GA2011a,ANtac:05,BoApNo:07} from The Control System Toolbox of MATLAB, 
while the certificate was computed using the routine {\it wcgain} from The Robust Control Toolbox. 

\begin{figure}[htp]
\centering
\includegraphics[width = 6cm,height=5cm]{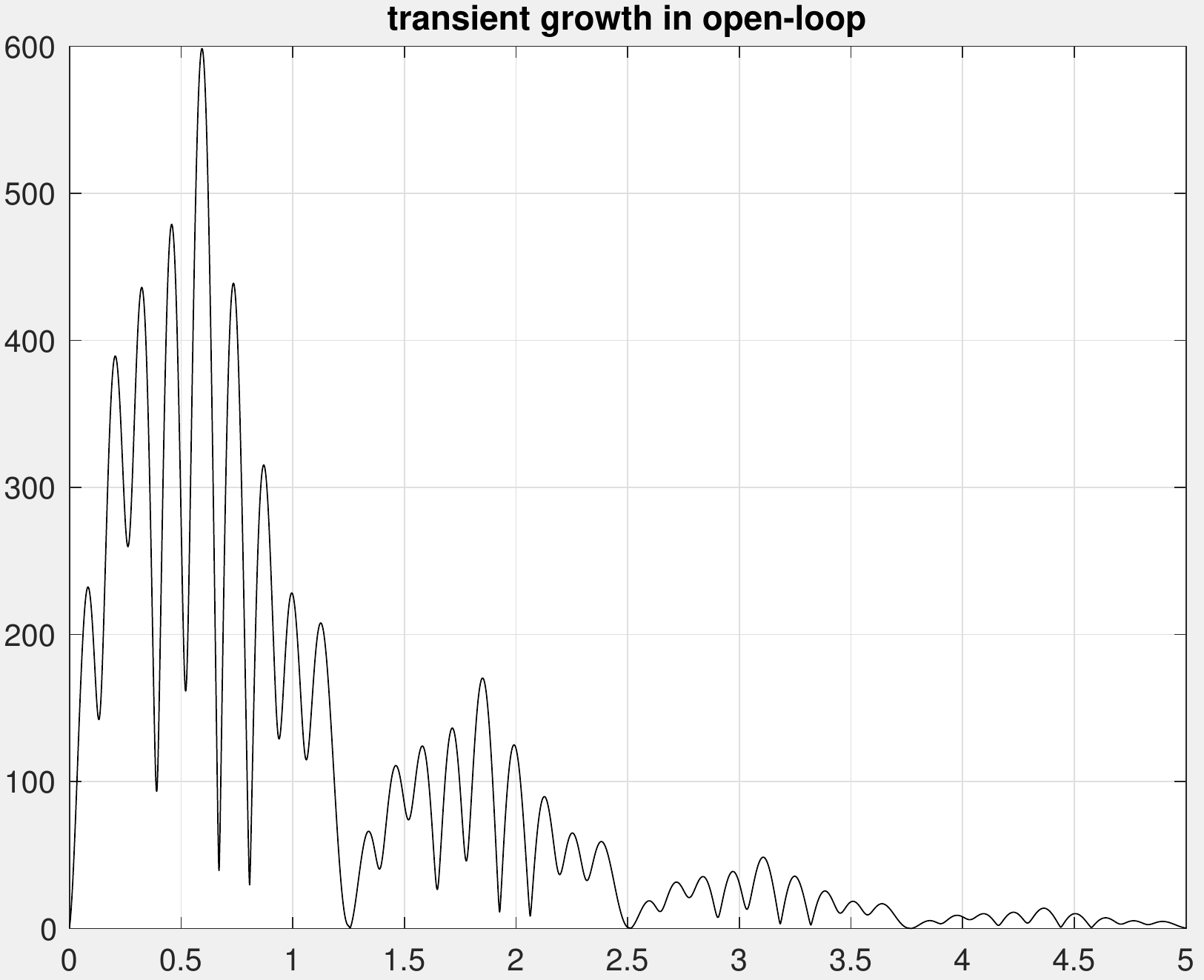}
\includegraphics[width = 6cm,height=5cm]{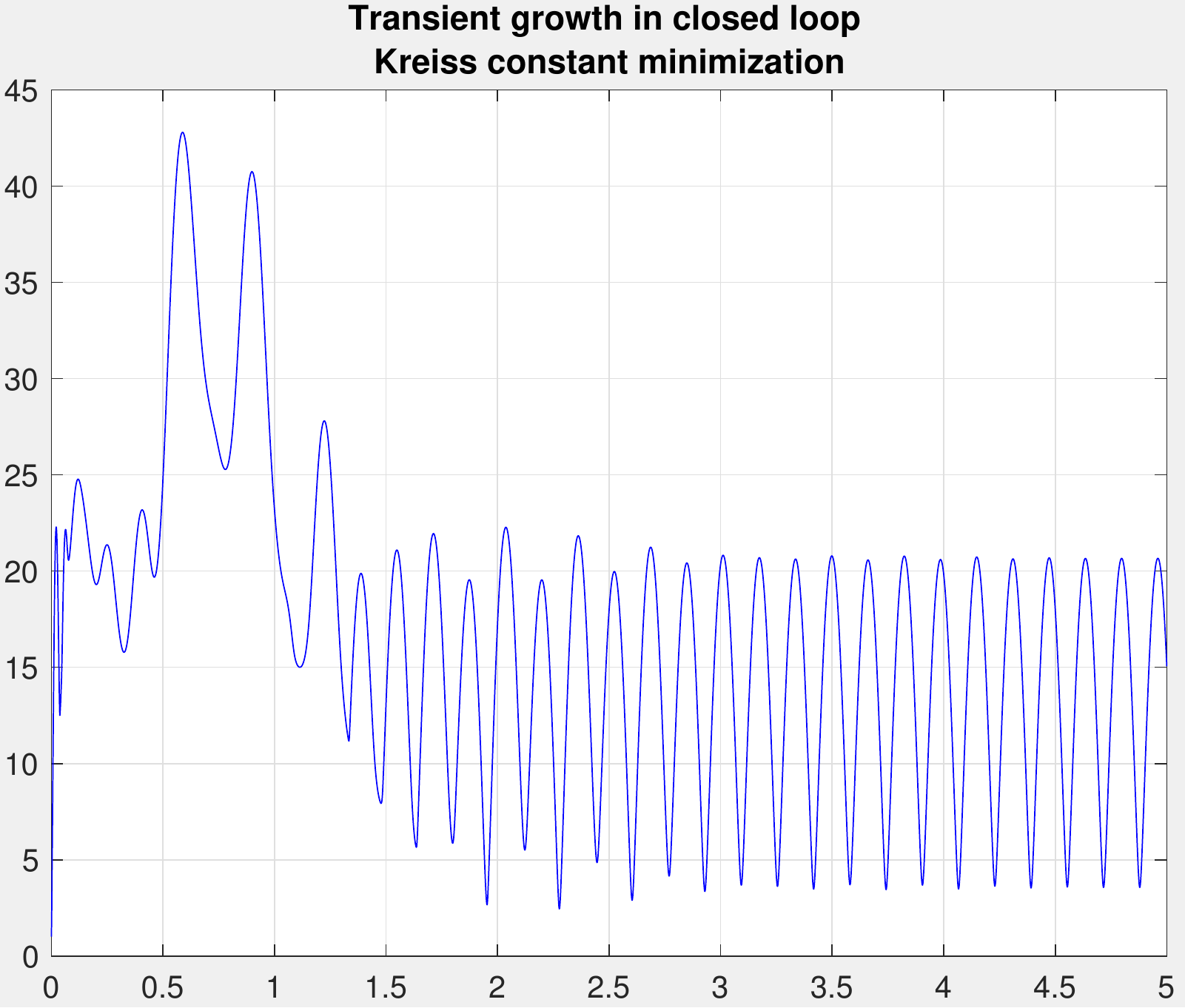}
\caption{Transient growth in open loop (left) and in closed loop (right) by minimizing the Kreiss constant.} 
\label{fig-OL}
\end{figure}


\subsection{Numerical abscissa approach} \label{subsec-NA}
The numerical abscissa of (\ref{eqPlant1}) is defined as 
$$\omega(A):= \textstyle\frac{1}{2}\,\overline{\lambda}(A+A^T)$$ where $\overline{\lambda}$ stands for the maximum eigenvalue of a symmetric matrix. 
The central  properties of  the numerical abscissa are summarized by the following
\begin{lemma}\label{lemma2} Consider a possibly unstable autonomous system {\rm (\ref{eqPlant1})}. Then the following hold: 

\begin{itemize}
    \item[(a)] The  transient growth satisfies $\|e^{At}\| \leq 1$ for all $t\geq 0$  iff 
    $\omega(A)\leq 0$. 
    \item[(b)] For every $t\geq 0$, 
    $\|e^{At}\| \leq e^{\omega(A) t}$ \,.
    \item[(c)] In the limit we have $$\lim_{t\downarrow 0}\frac{d}{dt}\|e^{At}\| = \omega(A)\,.$$
    \item[(d)] If $A$ is normal, then $\omega(A)=\alpha(A)$. 
\end{itemize}
\end{lemma}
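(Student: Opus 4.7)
\textbf{Proof plan for Lemma \ref{lemma2}.} The plan is to start from part (b), from which (a) follows immediately in one direction, then handle the converse in (a) by a first-order expansion that is essentially (c), then establish (c) rigorously, and finally treat (d) by a normal form argument.

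First, for (b), the idea is the classical logarithmic norm bound. Fix $x_0$ with $\|x_0\|=1$, set $y(t):= e^{At}x_0$, and compute
\[
\frac{d}{dt}\|y(t)\|^2 = y(t)^T(A+A^T)y(t) \leq \overline{\lambda}(A+A^T)\|y(t)\|^2 = 2\omega(A)\|y(t)\|^2.
\]
Gronwall's inequality then gives $\|y(t)\|^2 \leq e^{2\omega(A)t}$, whence $\|e^{At}x_0\|\leq e^{\omega(A)t}$, and taking the supremum over unit $x_0$ yields (b).

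Part (a) follows at once in the direction $\omega(A)\leq 0 \Rightarrow \|e^{At}\|\leq 1$ from (b). For the converse, I would argue by contradiction: assume $\omega(A) > 0$ and take a unit eigenvector $v$ of $A+A^T$ associated with the maximal eigenvalue $2\omega(A)$. Setting $y(t)=e^{At}v$, the computation above gives $\frac{d}{dt}\|y(t)\|^2\big|_{t=0} = 2\omega(A)>0$, so $\|e^{At}v\|>1$ for small $t>0$, contradicting $\|e^{At}\|\leq 1$.

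For (c), the plan is to work with $M(t):= e^{A^T t} e^{At}$, which is symmetric positive definite with $M(0)=I$ and $M'(0)=A+A^T$, and to use $\|e^{At}\|^2=\overline{\lambda}(M(t))$. At $t=0$ all eigenvalues of $M(0)$ coincide at $1$, so the maximum eigenvalue is not smooth in the usual sense; however, by standard perturbation theory for symmetric matrices (Rellich/Weyl), $\overline{\lambda}(M(t))$ admits the one-sided expansion $\overline{\lambda}(M(t)) = 1 + t\,\overline{\lambda}(A+A^T) + o(t) = 1 + 2t\omega(A)+o(t)$ as $t\downarrow 0$. Taking the square root gives $\|e^{At}\|=1+t\omega(A)+o(t)$, from which the claimed right-derivative equals $\omega(A)$. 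The main obstacle in the whole lemma is justifying this expansion cleanly in the presence of the eigenvalue multiplicity at $t=0$; one can alternatively verify it via the variational characterization $\overline{\lambda}(M(t))=\sup_{\|x\|=1}x^T M(t)x$, which is convex in $t$ and whose right-derivative at $0$ is computable by a standard envelope argument.

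Finally, for (d), if $A$ is real normal then there is an orthogonal $U$ such that $U^T A U$ is block-diagonal with $1\times 1$ blocks equal to the real eigenvalues $\lambda_i$ and $2\times 2$ blocks of the form $\bigl[\begin{smallmatrix}a&b\\-b&a\end{smallmatrix}\bigr]$ for each complex conjugate pair $a\pm ib$. Then $U^T(A+A^T)U$ is block-diagonal with $1\times 1$ blocks $2\lambda_i$ and $2\times 2$ blocks $2aI_2$, so the eigenvalues of $A+A^T$ are exactly $2\,\mathrm{Re}\,\lambda_i(A)$, and hence $\omega(A)=\max_i \mathrm{Re}\,\lambda_i(A)=\alpha(A)$.
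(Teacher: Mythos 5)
Your proposal is essentially correct, but note that the paper does not actually prove Lemma \ref{lemma2} at all: its ``proof'' is a pointer to the literature (Trefethen--Embree, Whidborne--McKernan, Hinrichsen--Pritchard), so you are supplying the argument the paper omits. What you give is the standard self-contained treatment: the logarithmic-norm (Gronwall) bound for (b), from which one direction of (a) is immediate; the eigenvector-of-$A+A^T$ contradiction for the converse of (a); the one-sided derivative of $\overline{\lambda}(e^{A^Tt}e^{At})$ at $t=0$ for (c), where the key point you correctly identify is that the maximal eigenvalue of $M(0)=I$ has full multiplicity, so one needs the directional derivative of $\overline{\lambda}$ at the identity (which is $\overline{\lambda}(M'(0))=\overline{\lambda}(A+A^T)$ because the relevant eigenspace is all of $\mathbb{R}^n$); and the real normal form for (d). This is exactly the route the cited references take, so nothing is lost relative to the paper, and the reader gains a complete argument.

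One small repair is needed in your fallback justification of (c): the map $t\mapsto x^TM(t)x=\|e^{At}x\|^2$ is \emph{not} convex in $t$ in general (e.g.\ $A=\bigl[\begin{smallmatrix}-1&10\\0&-1\end{smallmatrix}\bigr]$, $x=e_2$ gives $e^{-2t}(100t^2+1)$, which is concave near $t=1$), so $\sup_{\|x\|=1}x^TM(t)x$ need not be convex in $t$ either. The envelope argument still goes through without convexity: Danskin's theorem for a maximum of smooth functions over the compact unit sphere gives the right-derivative $\sup_{x\in\arg\max}x^TM'(0)x$, and since at $t=0$ every unit vector is a maximizer this equals $\overline{\lambda}(A+A^T)=2\omega(A)$. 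Also, strictly speaking the lemma writes $\lim_{t\downarrow 0}\frac{d}{dt}\|e^{At}\|$, which you interpret (as do the references) as the one-sided derivative at $t=0$; that is the intended meaning, since $\|e^{At}\|$ need not be differentiable at every $t>0$.
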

\begin{proof}
Proofs in various forms can be found in \cite{trefethen2005spectra,whidborne2007minimization,hinrichsen2000transient}. 
\hfill $\square$
\end{proof}

Property (a) gives a simple computational  test whether $A$ generates a contraction, hence whether $K(A)=1$. 
Property (c) indicates that the numerical abscissa determines the behavior of the transient growth as $t\to 0$, that is, in a short
time range. Property (b) on the other hand suggests that transient growth at intermediate times might also to some extent
be contained by making  the numerical abscissa as small as possible. \newline

\noindent
{\bf Example.} Strong dissipativity $A + A^T < 0$ implies $K(A)=1$
by condition (a) in Lemma \ref{lemma2}.
For upper triangular $2\times 2$ matrices $A = [a \; b ; 0 \; c]$ a necessary and sufficient condition for strong dissipativity
is $a < 0$ and  $4ac-b^2 > 0$.
This easily leads to non-normal matrices with $K(A) = 1$.

The numerical abscissa has been used in numerous studies and specifically in fluid  flow analysis to assess transition to turbulence, instabilities  and  limit cycles \cite{CGR:2009}. 
This suggests considering  the following indirect approach  to mitigate transient growth
of the plant state $x$ in closed loop: 
\begin{align}
\label{eqsynthNA}
\begin{array}{ll}
\mbox{minimize} &  \Omega(A_{cl}):=\omega\left(J^T A_{cl}(K) J\right)  \\
\mbox{subject to}& K \mbox{ stabilizing},  K \in \mathscr K \\
& \sigma(A_{cl}(K)) \in D \,.
\end{array}
\end{align}

This is an eigenvalue optimization program, which can in principle be solved
using BMI techniques \cite{LM:99,KS:2003},  but again we privilege a nonsmooth approach as in \cite{ANtac:05} thereby avoiding size inflation due to Lyapunov variables.

A closed-loop numerical abscissa of $\Omega(A_{cl})=502.0$ was achieved, thus improving over
the open-loop value of $\omega(A)=680.4$. Naturally, the optimal controller of (\ref{eqsynthNA}) has a lower
closed-loop numerical abscissa than the
Kreiss controller in section \ref{subsec-Kreiss}, which gave the numerical abscissa of  $656$. However,
as can be observed in Fig. \ref{fig-CLNA} (left), minimization of the numerical abscissa did 
not achieve the desired effect of limiting the transient growth. The controller of (\ref{eqsynthNA})
did not even improve  over the open-loop behavior in Fig. \ref{fig-OL}. 
Those results are in line with the qualitative analysis \cite{trefethen2005spectra}, which identifies the numerical abscissa as a good indicator 
only for  $t\to 0$. 

The locally optimal $3$rd-order controller for program (\ref{eqsynthNA}) is given  as
$$K(s)= \footnotesize
\left[\begin{array}{lll|l} 
59.9714 & 140.8838 & 0.0000 & 125.1870 \\    
                                  
100.3809 & 151.4666 & -0.9285 & 152.6506 \\  
                                      
0.0000 & -271.6638 & -612.4505 & 514.0162 \\ \hline 
                                      
-180.2674 & 2.4115 & 610.7701 & -818.7354 \\ 
                                  
-1.9939 & 17.2208 & 896.7905 & 248.2384 \\   
                                     
134.2585 & 322.4479 & 198.7380 & 27.7581 \\  
                                      
145.1514 & 114.7305 & -229.1801 & 350.4296
\end{array} \right]
$$

\begin{figure}[htp]
\centering
\includegraphics[width = 5cm, height=5cm]{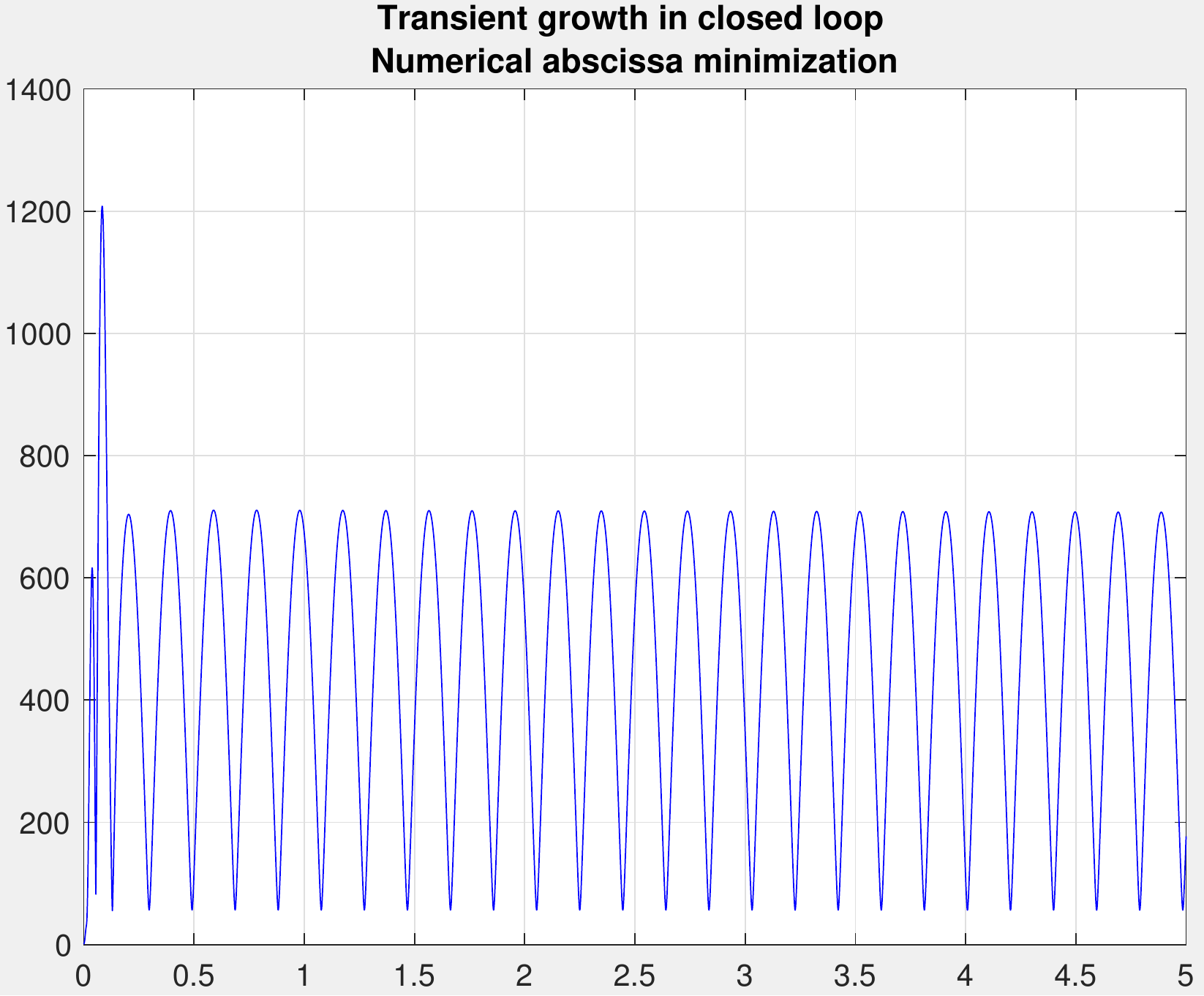}
\includegraphics[width = 5cm, height=5cm]{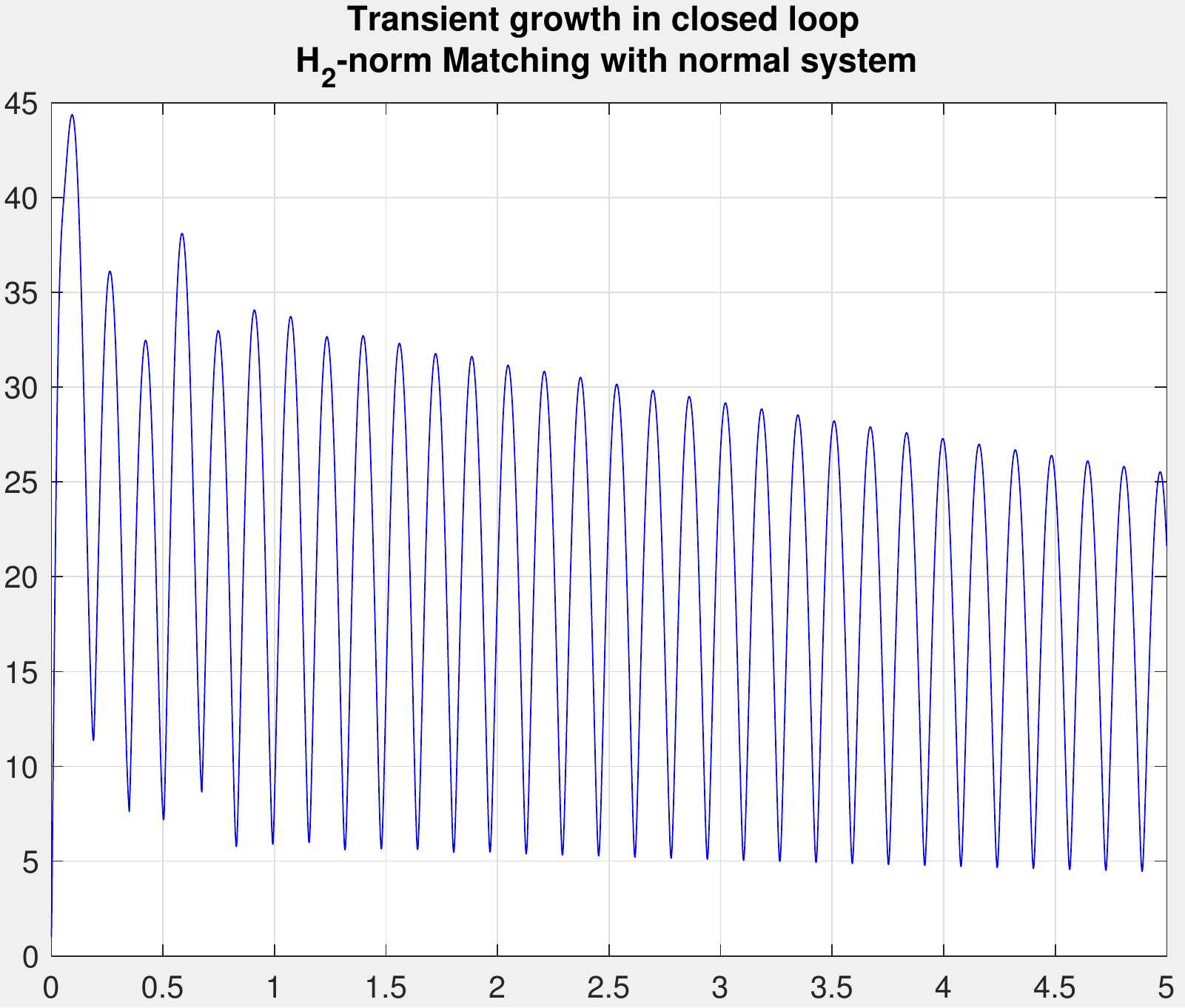}\includegraphics[width = 5cm,height=5cm]{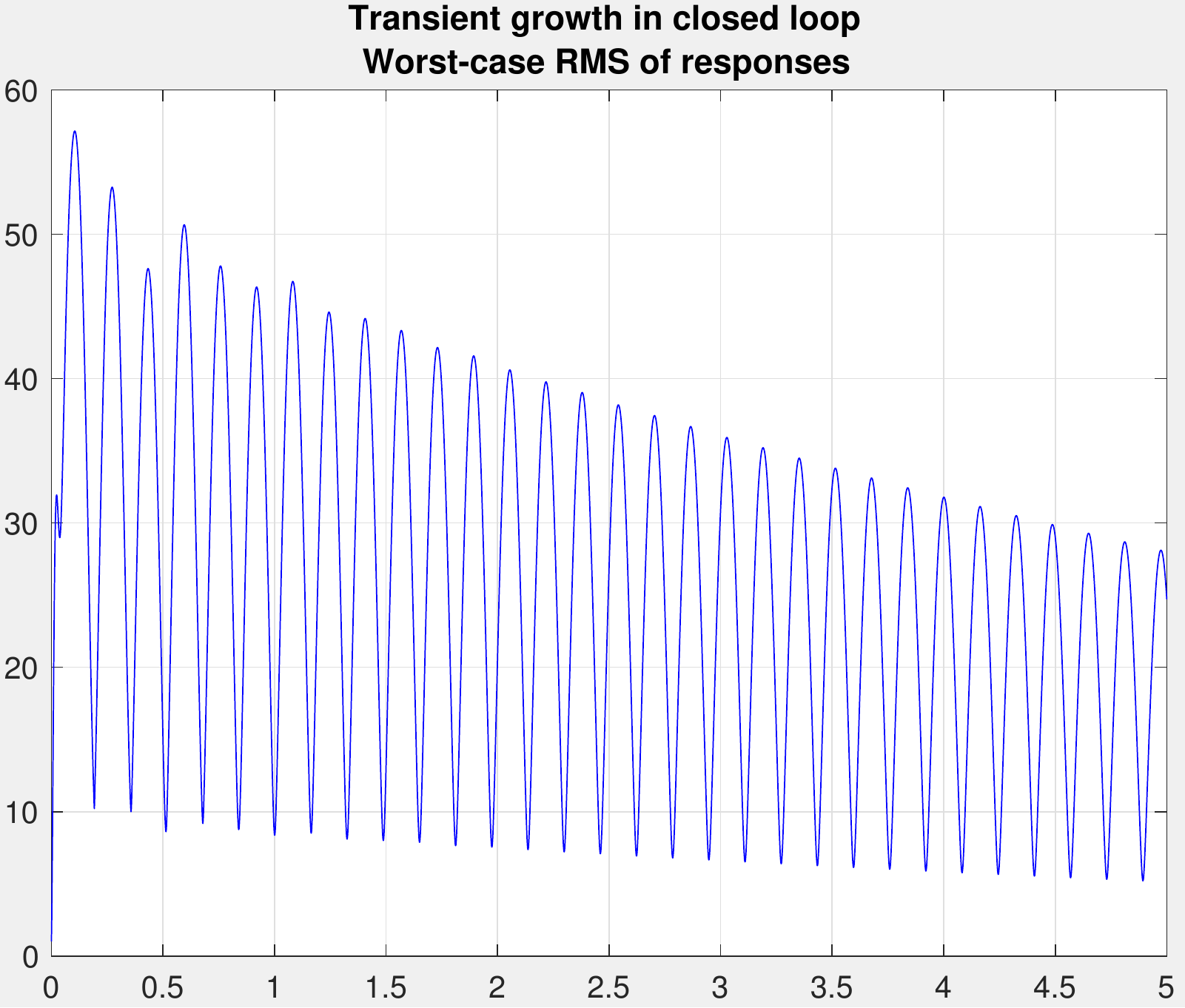}
\caption{Transient growth in closed loop.  Minimization of numerical abscissa (\ref{eqsynthNA}) left.
 $H_2$-norm matching with normal  model (\ref{eqsyntH2}) middle. Worst-case energy response (\ref{eqWCH2}) right.} 
\label{fig-CLNA}
\end{figure}

\subsection{$H_2$ model matching with normal dynamics\label{subsec-H2}}
In this section we discuss yet another method to constrain transient growth in closed loop.
For given initial conditions $x(0)=x_0$, the closed-loop state responses of the plant $G_{cl}$  are described by
\begin{align*}
     \dot x_{cl} & =  A_{cl}(K) x_{cl} + J w,\quad w = x_0 \delta(t)  \\
     z & =  J^T x_{cl}\,.
\end{align*}
%
By tuning the controller $K\in \mathscr K$,
we would like this system to behave similar to an ideal reference system $G_r(s)$ deliberately constructed to exhibit small transient growth, say,  
%
\begin{align*}
     \dot x_r & = A_r x_r +  w_r,\quad w_r = x_r^0 \delta(t)  \\
     z_r & = x_r \,.
\end{align*}
This leads to a model matching optimization problem,
where we minimize the mismatch $z-z_r$ between the responses of both systems, 
started from the same initial conditions $w = w_r = x_0 \delta(t)$. 
If $z-z_r$ is measured in the energy norm,
this leads to
\begin{align*}
\|z-z_r\|_2 & =  \|G_{cl}x_0 \delta(t)-G_r x_0 \delta(t)\|_2 
\leq  \|G_{cl}-G_r\|_2 \|x_0\| \,,
\end{align*}
where for systems $\|G-G_r\|_2$ means the $H_2$-norm.
Consequently, we consider the following cast:
\begin{align}
\label{eqsyntH2}
\begin{array}{ll}
\displaystyle\mbox{minimize} & \|J^T(sI-A_{cl}(K))^{-1}J - (sI-A_r)^{-1}\|_2\\
\mbox{subject to}& K \mbox{ stabilizing},  K \in \mathscr K \\
& \sigma(A_{cl}(K)) \in D \,,
\end{array}
\end{align}
where as before, one enforces structural constraints on the controller $K \in \mathscr K$, 
and spectral constraints $\sigma(A_{cl}(K)) \in D$ on the loop, ruling out slow responses and much too high gain controllers, respectively. 

This indirect approach to transient growth mitigation is illustrated for the system in (\ref{eqA})-(\ref{eqBCD}), 
where the reference model is selected with  normal dynamics $ G_r(s) = (s I  - (-I))^{-1}$ and numerical abscissa 
$\omega(A_r) = \alpha(A_r)=  -1$.  With $\mathscr K$ the set of $3$rd-order controllers, and the semi-disk $D$ unchanged as in Fig. \ref{fig-Disk}, 
solving program (\ref{eqsyntH2}) leads to the controller 
$$K(s)= \footnotesize
\left[\begin{array}{lll|l}                     
-10.0166 & 32.8652 & 0.0000 & 4.2887 \\        
-5.3332 & -75.2766 & 74.7646 & 83.9716 \\      
0.0000 & 246.4755 & -258.5282 & -246.5133 \\   \hline                                         
-205.9510 & 236.5090 & -123.3962 & -152.2283 \\
-1153.0456 & -879.8479 & -71.1224 & 150.9151 \\
-13.2672 & -120.1666 & 21.8126 & 115.6246 \\   
21.5530 & 3.7044 & 60.9500 & 127.7649        
\end{array} \right].
$$

%

\noindent
The associated transient growth $\|J^Te^{A_{cl}t}J\|$ in closed-loop is shown in Fig.  \ref{fig-CLNA} (middle),  
with peak value
$\mathcal M_0(A_{cl})=44.37$, which indicates that this indirect approach is competitive with the Kreiss constant minimization. 
Even better results might be obtained by using a more plausible reference model $G_r$, 
but  this has not been pursued further in this work.

\subsection{Worst-case energy response approach} \label{subsec-WCRMS}

In this section, we change metrics and replace 
$
\max_{\|x_0\| \leq 1} \sup_{t\geq 0}\|x(t)\| = \max_{\|x_0\|_2 \leq 1} \|x\|_\infty
$
with the new norm
$
\max_{\|x_0\|_\infty \leq 1} \|x\|_2 = \max_{|x_{0,i}| \leq 1,\, i= 1,\ldots,n} \sqrt{\int_{0}^{\infty} x(t)^T x(t) dt} \,,
$
and investigate whether the substitute has some merit in
reducing transient growth in closed loop with output feedback.

The closed-loop formulation in state-space is now given by the system: 
\begin{align*}
\dot x_{cl} & =  A_{cl} x_{cl} + J w   \\
 z    & =  J^T x_{cl}\qquad (= x) \\
 w  & =  x_0 \,\delta(t), \;\, \|x_0\|_\infty \leq 1\,.
\end{align*}

\noindent
This in turn leads to the minimization problem 
\begin{align}
\label{eqWCH2}
\begin{array}{ll}
\mbox{minimize} & \displaystyle\max_{\|x_0\|_\infty \leq 1} \left\| J^T \left( sI-   A_{cl}(K) \right)^{-1} J x_0 \right\|_2\\
\mbox{such that}& K \mbox{ stabilizing},  K \in \mathscr K \\
& \sigma(A_{cl}(K)) \in D \,,
\end{array}
\end{align}
which is similar in nature to the worst-case performance problem of the Kreiss constant approach in (\ref{eqsynth2})
and can be solved  with the same  techniques. 

For fixed $K$, program (\ref{eqWCH2}) has a certificate in terms of a convex SDP. To see this, we note first that the state-space data in (\ref{eqWCH2}) 
range over a matrix polytope
$$
\left\{ \sum_{i=1}^{2^n} \theta_i \begin{bmatrix} A_{cl} & J v_i \\ J^T & 0
\end{bmatrix}:\; \sum_{i=1}^{2^n} \theta_i = 1, \; \theta_i \geq 0 \right\}\,,
$$
where the $v_i$'s, $i = 1,\ldots,2^n$ denote the vertices of the unit cube $[-1,1]^n$. The optimal value of program (\ref{eqWCH2}) is then  
$< \gamma$  if an only if  there exist a  Lyapunov matrix function $X(v)=X(v)^T\succ 0$ with $v = \sum_{i=1}^{2^n} \theta_i v_i$, $\theta_i \geq 0$,
$\sum_{i=1}^{2^n} \theta_i = 1$, such that  
\begin{equation}\label{eq-v}
\begin{array}{l}
\begin{bmatrix}
A_{cl} X(v) + X(v) A_{cl}^T &  J v \\ (\bullet)^T & -1
\end{bmatrix} \prec 0, \quad
{\rm Tr}(J^T X(v) J) < \gamma^2,\qquad \forall v \in [-1,1]^n \,.
\end{array}
\end{equation}
In particular, taking $v=v_i$ and denoting $X_i:= X(v_i)$, this implies 

\begin{equation}\label{eq-vi}
\begin{bmatrix}
A_{cl} X_i + X_i A_{cl}^T & J v_i \\ (\bullet)^T & -1
\end{bmatrix} \prec 0, \;
{\rm Tr}(J^T X_i J) < \gamma^2,\qquad  i = 1,\ldots,2^n\,.
\end{equation}

Conversely, taking convex combinations of the inequalities in (\ref{eq-vi}) shows that $X(v) = \sum_{i=1}^{2^n} \theta_i X_i$ is a suitable Lyapunov matrix for which (\ref{eq-v}) holds. 

We have thus established that certification of $H_2$ performance $\gamma$ reduces to
constraints at the vertices and can be done by solving the SDP:
\begin{align*}
    \begin{array}{ll}
    \mbox{minimize} & \gamma^2 \\
    \mbox{subject to}& \begin{bmatrix}
A_{cl} X_i + X_i A_{cl}^T & J v_i \\ (\bullet)^T & -1
\end{bmatrix} \prec 0, \\ 
&X_i=X_i^T \succ 0, {\rm Tr}(J^T X_i J) < \gamma^2,\qquad  i = 1,\ldots,2^n\,
\end{array}    
\end{align*}
with decision variables $X_i, \gamma$.
%
See \cite{elghaoui92_1} for 
examples of polytopic linear differential inclusions. 
Again one has to stress that such a certificate may be too expensive even for medium size applications due to the limitation of current SDP solvers. 

With the same starting points, controller structure $\mathscr K$ and semi-disk $D$, a solution $K(s)$ to program 
(\ref{eqWCH2}) was obtained as 
$$K(s)= 
\footnotesize 
\left[
\begin{array}{lll|l}                    
-11.5489 & 78.9907 & 0.0000 & 53.2452 \\      
199.9054 & -357.8574 & 329.8169 & -206.2099 \\
0.0000 & -60.0656 & -22.2754 & -40.3642 \\    \hline                                        
-136.5439 & -7.6336 & 193.7006 & 30.1711 \\   
-1434.8960 & 269.1622 & -473.4523 & 27.1643 \\
-482.9145 & 868.9921 & -824.9746 & 499.5364 \\
-39.9217 & 559.8141 & -80.1572 & 351.7574 \\  
\end{array}   \right].
$$

The transient growth in closed-loop is presented in Fig. \ref{fig-CLNA} (right), 
indicating that this alternative technique, while inferior to the Kreiss approach with a peak transient growth of 
$\mathcal M_0(A_{cl})= 57.1$, and closed-loop Kreiss constant of $\mathcal K(A_{cl})= 24.8$, may be a valid alternative. 


All results obtained so far are presented in table \ref{tabAll}. Synthesis based on the Kreiss constant  is clearly the best approach in terms of peak value amplification at the expense of longer computational times. 

\begin{table}[htbp]
\caption{Summary of results  in closed-loop:  transient growth $\mathcal M_0$,  Kreiss certificate $\mathcal K$, numerical abscissa $\Omega $ and mean running time per run in sec.}
    \label{tabAll}
\begin{center}
\begin{tabular}{|| l|l|l|l|l ||}
\hline\hline
 & $\mathcal M_0$  & $\mathcal K$ & $\Omega $ & cpu \\ \hline
section \ref{subsec-Kreiss}& $42.8$     & $10.91$  & $656$ & 32  \\ \hline
section \ref{subsec-NA}& $1208$    & $349.6$  & $502$ &  1.3  \\ \hline
section \ref{subsec-H2}& $44.37$    & $23.5$  & $621$  &  1.7 \\ \hline
section \ref{subsec-WCRMS}& $57.1$    & $24.8$  & $686$ & $4.7$  \\ \hline\hline
\end{tabular}
\end{center}
\end{table}

\subsection{A nonlinear example} \label{subsec-NL}
In this section, we illustrate how optimizing the Kreiss constant can be exploited
to mitigate adverse effects of nonlinearities.   The example is borrowed from \cite{trefethen1993hydrodynamic} and was used to illustrate how non-normality in the linear 
portion of the system can trigger nonlinear effects and may generate convergence to  undesired critical points. It has been complemented by one actuator and one sensor so that 
feedback control becomes applicable. 

The system dynamics are given as 
\begin{align}\label{eq-NLSyst}
\begin{split}
\dot x & = A x + \|x\| B_x x + Bu \\ 
y & = C x 
\end{split}
\end{align}
with 
{\small \begin{align*}
\begin{split}
A &= \begin{bmatrix}
-1/R & 1 \\ 0 &-2/R
\end{bmatrix},\;  B_x = \begin{bmatrix}
0 & -1 \\1 & 0
\end{bmatrix}, \; B = \begin{bmatrix}
1\\1
\end{bmatrix},\; 
 C = \begin{bmatrix}
1 & 0
\end{bmatrix},\; R = 25\,.
\end{split}
\end{align*}
}

The linear dynamics are indeed non-normal with Kreiss constant $K(A)= 4.36$, and according to section \ref{sect:exact}, one can anticipate significant transient growth. This is confirmed in Fig. \ref{figNL} left for a set of initial conditions $x_0 = \begin{bmatrix}
0 & x_2(0)
\end{bmatrix}^T$, with $x_2(0) \in \left\{1\mathrm{e}{-7}, 1\mathrm{e}{-6}, 1\mathrm{e}{-5}, 1\mathrm{e}{-4}, 4\mathrm{e}{-4}, 5\mathrm{e}{-4}, 1\mathrm{e}{-3}, 1\mathrm{e}{-2} \right\}$.

According to \cite{trefethen1993hydrodynamic}, the open-loop system converges to a remote  unexpected critical point for $x_2(0) > 4.22\mathrm{e}{-4}$ which evokes a butterfly effect with big consequences.  See Fig. \ref{figNL} right. 

In an attempt to mitigate undesirable nonlinear effects, we minimize the Kreiss constant 
as discussed in section \ref{sect:feedback}. The program is again (\ref{eqsynth2}) with disk constraints of Fig. \ref{fig-Disk} unchanged and using for $\mathscr K$ the set of $2$nd-order controllers. This gives the following controller and corresponding $A$-matrix:

$$
{\small
K(s)= 
\left[
\begin{array}{ll|l}    
-3.4146 & -0.1902 & -1.7997 \\
-0.2856 & -2.6781 & -0.1119 \\ 
\hline                        
-1.8068 & -0.1095 & -1.3710 \\
\end{array}   \right] , }\;
{\small A_{cl} = 
\begin{bmatrix}             
-1.4110 & 1.0000 & -1.8068 & -0.1095 \\ 
                                
-1.3710 & -0.0800 & -1.8068 & -0.1095 \\
                                
-1.7997 & 0.0000 & -3.4146 & -0.1902 \\ 
                                 
-0.1119 & 0.0000 & -0.2856 & -2.6781 \\ 
\end{bmatrix}        
}                   
$$
with nearly unit closed-loop Kreiss constant $\mathcal K(A_{cl}) \approx 1$ where $J = [I_2 \, 0]^T$. 
This is confirmed in Fig. \ref{figNL} (middle),  where identical plant-state initial conditions now converge monotonically  to the zero equilibrium as desired.

\begin{figure}[htp]
\centering
\includegraphics[width = 8cm, height=6cm]{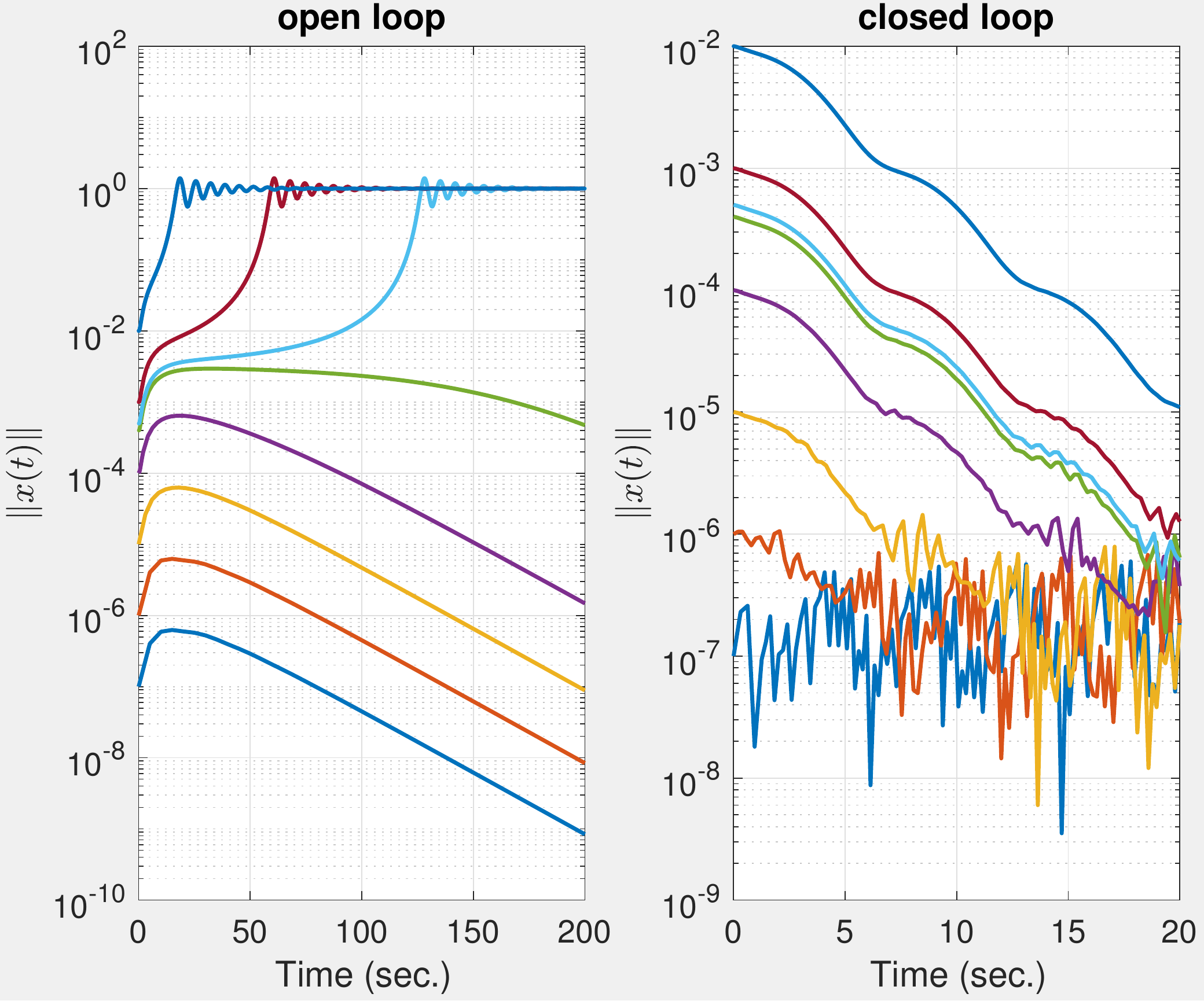}$\quad$
\includegraphics[width = 6cm, height = 6cm]{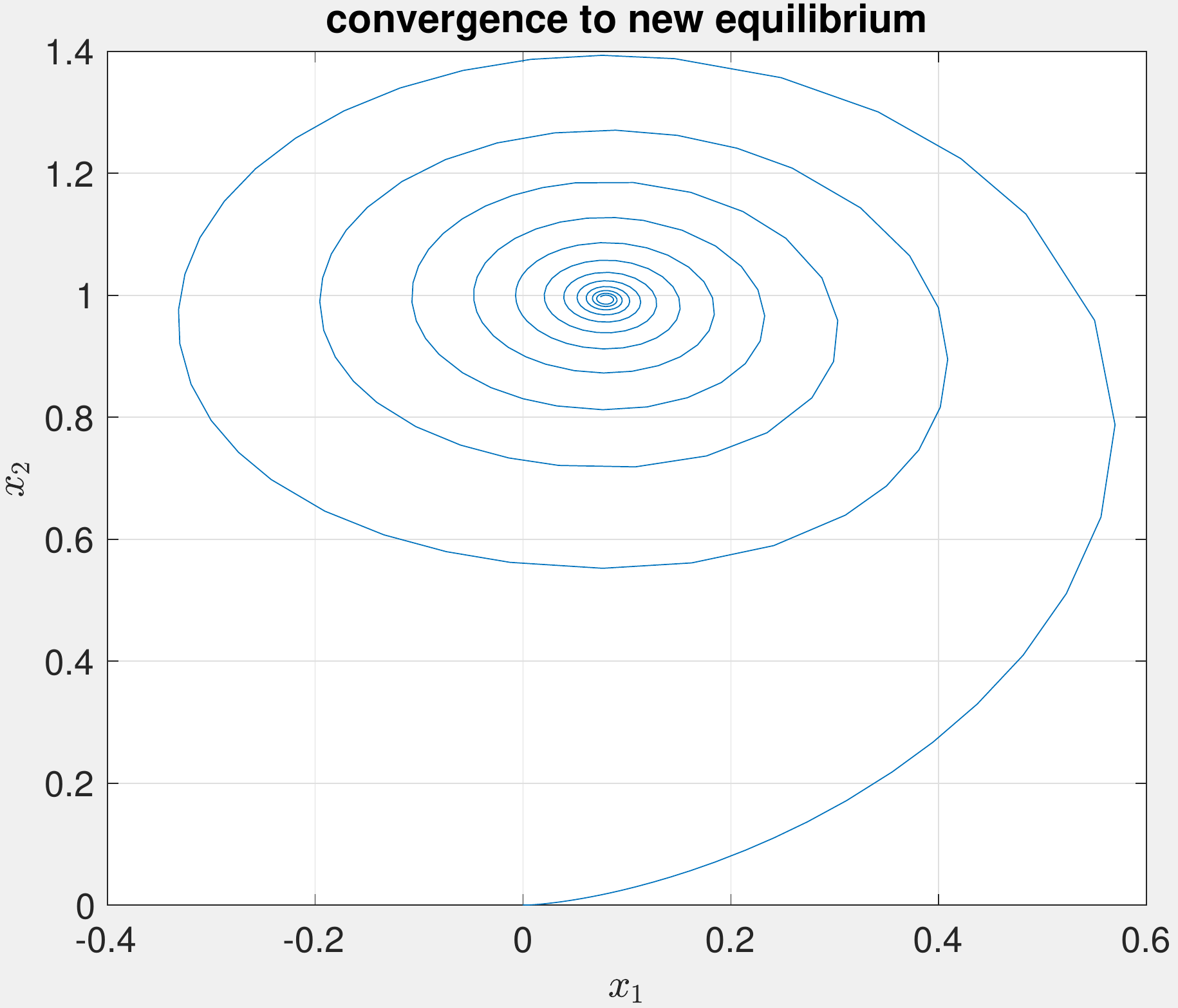}
\caption{Simulations of nonlinear system. Left: open loop. Middle: closed loop. 
Right: open-loop phase portrait for $x_0= [0;\, 5\mathrm{e}{-4}]$} 
\label{figNL}
\end{figure}

\section{Conclusions}
In this work, we have introduced a new exact computational technique for the Kreiss constant which essentially reduces to solving a 
robust performance analysis problem
of low complexity accessible to fairly standard $\mu$ tools. 
The new characterization is then further exploited by minimizing the Kreiss constant in closed loop,
with the goal to mitigate transient growth of potentially highly 
non-normal dynamics by the use of feedback. This leads to a special class of parametric uncertain structured $H_\infty$-control problems that are conveniently 
addressed with specialized non-smooth optimization 
methods. 
The development of mixed methods using jointly the Kreiss constant (peak growth), the numerical abscissa (initial growth), and the spectral abscissa (asymptotic growth)  to better shape the transient behavior is easily derived as a byproduct of this work.

A number of  comparisons have been made with indirect, more heuristic techniques. Our preliminary testing indicates that while seemingly conservative in the Kreiss matrix Theorem, the Kreiss constant can be an effective indicator of transient growth, and can be used to reduce it in closed loop. 
Some of the indirect approaches to transient growth,
even though suboptimal in theory, could constitute  valuable and  less costly alternatives. 

The LMI technique in section \ref{subsec-Kreiss} is suited for small to medium size problems.
For large scale problems more dedicated calculation methods will be required. 
This is in particular true for the $\mu$-certificate, which is a very general technique covering a wide class of problems, but leads to LMI-programs,
which are currently not fit for large dimensions. This is why in larger dimensions certification for $K(A)$ is performed with the method
of section \ref{sect:optim}, which is functional for systems up to several hundred states.
Specialization  to the Kreiss constant computation program (\ref{eqpg2}), which features a single repeated parameter  uncertainty 
is expected to render certification fit for even larger dimensions.

\bibliographystyle{plain}
\bibliography{DATABASE,biblio}
\end{document}